\newtheorem{nummer}{ }[section]
\newtheorem{thm}[nummer]{\sc Theorem}
\newtheorem{prop}[nummer]{\sc Proposition}
\newtheorem{lem}[nummer]{\sc Lemma}
\newtheorem{cor}[nummer]{\sc Corollary}
\newtheorem{defi}[nummer]{\sc Definition}
\newtheorem{remark}[nummer]{\sc Remark}
\renewcommand{\qedsymbol}{{\large$\boldsymbol{\dashv}$}}
\DeclareMathOperator{\fix}{{\rm fix}}
\newcommand{\fin}{\operatorname{fin}}
\begin{document}

\title{\textbf{Some implications of Ramsey Choice for $n$-element sets}}
\author{Lorenz Halbeisen and Salome Schumacher}
\date{}
\maketitle

\begin{addmargin}[5ex]{6ex}
\small \textit{key-words:} Axiom of Choice, weak forms of the Axiom of Choice, Ramsey Choice, Partial Choice for infinite families of $n$-element sets, Permutation models, Pincus' transfer theorems
\end{addmargin}

\hspace{5ex}{\small{\it 2010 Mathematics Subject Classification\/}: 03E25, 03E35 

\begin{abstract}\noindent
Let $n\in\omega$. The weak choice principle $\operatorname{RC}_n$ states that for every infinite set $x$ there is an infinite subset $y\subseteq x$ with a choice function on $[y]^n:=\{z\subseteq y\mid \lvert z\rvert =n\}$. $\operatorname{C}_n^-$ states that for every infinite family of $n$-element sets, there is an infinite subfamily $\mathcal{G}\subseteq\mathcal{F}$ with a choice function. $\operatorname{LOC}_n^-$ and $\operatorname{WOC}_n^-$ are the same statement but we assume that the family $\mathcal{F}$ is linearly orderable ($\operatorname{LOC}_n^-$) or well-orderable ($\operatorname{WOC}_n^-$).\newline

\noindent In the first part of this paper we will give a full characterization of when the implication $\operatorname{RC}_m\Rightarrow \operatorname{WOC}_n^-$ with $m,n\in\omega$ holds in $\operatorname{ZF}$. We will prove the independence results by using suitable Fraenkel-Mostowski permutation models. In the second part of we will show some generalizations. In particular we will show that $\operatorname{RC}_5\Rightarrow \operatorname{LOC}_5^-$ and $\operatorname{RC}_6\Rightarrow \operatorname{C}_3^-$, answering two open questions from Halbeisen and Tachtsis in \cite{Halbeisen17}. Furthermore we will show that $\operatorname{RC}_6\Rightarrow \operatorname{C}_9^-$ and $\operatorname{RC}_7\Rightarrow \operatorname{LOC}_7^-$.

\end{abstract}

\section{Definitions and Introduction}

The notation we use is standard and follows that of \cite{Halbeisen12}. Now we list some definitions that shall be used in the sequel: 
\begin{defi}Let $n$ be a natural number.
\begin{enumerate}
	\item $\operatorname{C}_n^-$ states that every infinite family $\mathcal{F}$ of sets of size $n$ has an infinite subset $\mathcal{G}\subseteq\mathcal{F}$ with a choice function.

	\item $\operatorname{LOC}_n^-$ states that every infinite, linearly orderable family $\mathcal{F}$ of sets of size $n$ has an infinite subset $\mathcal{G}\subseteq\mathcal{F}$ with a choice function.

	\item $\operatorname{WOC}_n^-$ states that every infinite, well-orderable family $\mathcal{F}$ of sets of size $n$ has an infinite subset $\mathcal{G}\subseteq\mathcal{F}$ with a choice function.

	\item $\operatorname{RC}_n$ states that every infinite set $X$ has an infinite subset $Y\subseteq X$ such that $[Y]^n=\{ z\subseteq Y\mid \lvert z\rvert=n\}$ has a choice function.

	\item Let $\mathcal{F}$ be an infinite family of $n$-element sets. A Kinna-Wagner selection function of $\mathcal{F}$ is a function $f$ with $\operatorname{dom}(f)=\mathcal{F}$ such that for all $p\in\mathcal{F}$, $\emptyset\not =f(p)\subsetneq p$.

	\item $\operatorname{KW}_n^-$ states that every infinite family $\mathcal{F}$ of sets of size $n$ has an infinite subset $\mathcal{G}\subseteq\mathcal{F}$ with a Kinna-Wagner selection function.

	\item $\operatorname{LOKW}_n^-$ states that every infinite, linearly orderable family $\mathcal{F}$ of sets of size $n$ has an infinite subset $\mathcal{G}\subseteq\mathcal{F}$ with a Kinna-Wagner selection function.
\end{enumerate}
\end{defi}

In 1999, Montenegro proved in \cite{Montenegro99} that $\operatorname{RC}_n\Rightarrow \operatorname{C}_n^-$ for all $n\in \{2,3,4\}$. It is still unknown whether this implication holds for any $n>5$. In 2017, Halbeisen and Tachtsis found some interesting results concerning the implications $\operatorname{RC}_m\Rightarrow \operatorname{C}_n^-$ and $\operatorname{RC}_m\Rightarrow \operatorname{RC}_n$ for $m,n\in\omega\setminus \{0,1\}$ (see \cite{Halbeisen17}). Among other results they were able to prove the following statements:
\begin{enumerate}
	\item[($\alpha$)] If $m,n\in\omega\setminus \{0,1\}$ are such that there is a prime $p$ with $p\nmid m$ and $p\mid n$, then 
$$
\operatorname{RC}_m\not\Rightarrow \operatorname{RC}_n\text{ and }\operatorname{RC}_m\not \Rightarrow \operatorname{C}_n^-
$$
in $\operatorname{ZF}$.
	\item[($\beta$)] $\operatorname{RC}_5$ implies none of $\operatorname{LOC}_2^-$and $\operatorname{LOC}_3^-$. 
	\item[($\gamma$)] For every $n\in\omega\setminus \{0,1\}$ we have that $\operatorname{C}_n^-\Rightarrow\operatorname{LOC}_n^-\Rightarrow\operatorname{WOC}_n^-$ but none of these implications is reversable in $\operatorname{ZF}$.
	\item[($\delta$)] For every $n\in\omega\setminus \{0,1\}$ the implication $\operatorname{RC}_{2n}\Rightarrow\operatorname{LOKW}_n^-$ holds. In particular we have that $\operatorname{RC}_6\Rightarrow \operatorname{LOC}_3^-$ in $\operatorname{ZF}$.
\end{enumerate}

In this paper we will give a full characterization of when $\operatorname{RC}_n\Rightarrow \operatorname{WOC}_m^-$ holds for $n,m\in\omega\setminus \{0,1\}$. To be more precise, we will show the following result:

\begin{thm}
\label{maintheorem}
Let $m,n\in\omega\setminus 2$. Then $\operatorname{RC}_m$ implies $\operatorname{WOC}_n^-$ if an only if the following condition holds: For all prime numbers $p_0,\dots, p_{k-1}$, $k\in \omega$, such that there are $a_0,\dots,a_{k-1}\in\omega$ with
$$
n=\sum_{i< k}a_ip_i,
$$
we can find $b_0,\dots, b_{k-1}\in\omega$ with 
$$
m=\sum_{i< k} b_ip_i.
$$
\end{thm}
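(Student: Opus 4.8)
The plan is to treat the two directions separately, after first recasting the displayed arithmetic condition into a form adapted to the combinatorics. Write $\langle P\rangle=\{\sum_{p\in P}c_p p: c_p\in\omega\}$ for the numerical semigroup generated by a finite set $P$ of primes, so the condition reads: for every finite set of primes $P$ with $n\in\langle P\rangle$ one has $m\in\langle P\rangle$. I would first isolate the primes that matter by calling a prime $p$ \emph{admissible for $n$} if it occurs with positive coefficient in some representation $n=\sum_{q\in P}a_q q$. Since $2$ and $3$ already generate every integer $\ge 2$, a short calculation pins down the admissible primes (essentially the primes $p\le n$, apart from the degenerate value $p=n-1$) and lets me rephrase the hypothesis as an explicit statement about which primes are forced to divide $m$. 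This reformulation is the bridge between the number theory and the permutation--model constructions, and I would prove it as a preliminary lemma; its key virtue is that the set of admissible primes shrinks when $n$ is replaced by a smaller value, which is what makes the induction below inherit its hypothesis.

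For the forward direction I assume the condition and derive $\operatorname{RC}_m\Rightarrow\operatorname{WOC}_n^-$ in $\operatorname{ZF}$ by induction on $n$. Given an infinite well--orderable family $\mathcal F$ of $n$--element sets, I pass to a countable subfamily and reduce to the pairwise disjoint case by a standard thinning that uses the well--ordering (a subfamily sharing a common point admits a constant choice function, so this case is trivial). Applying $\operatorname{RC}_m$ to $\bigcup\mathcal F$ yields an infinite $Y$ with a choice function $\gamma$ on $[Y]^m$; a pigeonhole on the trace sizes $|Y\cap F|$ thins $\mathcal F$ to a subfamily on which this size is a constant $s$. If $0<s<n$ the map $F\mapsto Y\cap F$ is a Kinna--Wagner selection into proper nonempty subsets, so I recurse on the strictly smaller sets $Y\cap F$ (or their complements) and invoke the induction hypothesis, which applies precisely because the reformulated condition descends to smaller sizes. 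The remaining case is $s=n$, i.e. infinitely many full sets $F\subseteq Y$: here I group the sets into consecutive blocks whose union is an $m$--element set and let $\gamma$ mark one element per block, yielding a choice function on an infinite subfamily. The arithmetic enters exactly in the formation of these $m$--sets from the available pieces: the condition guarantees that $m$ lies in the semigroup generated by the sizes that survive the reduction, so that complete $m$--subsets can be assembled and $\gamma$ can be applied.

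For the independence direction I assume the condition fails, fix a witnessing prime set $P$ with $n\in\langle P\rangle$ but $m\notin\langle P\rangle$, and observe that $m\notin\langle P\rangle$ forces $p\nmid m$ for every $p\in P$. I then build a Fraenkel--Mostowski model whose atoms are partitioned into rigidly indexed $n$--element blocks $B_j$ $(j\in\omega)$, each block a disjoint union of cyclic orbits of sizes $p\in P$ following a fixed representation $n=\sum a_p p$; the group $\mathcal G$ consists of the coordinatewise rotations of these orbits, with no block or orbit moved, and the normal filter is generated by finite supports. Since blocks are not permuted, $\{B_j:j\in\omega\}$ is well--orderable, and a support argument shows any choice function on an infinite subfamily would have to be fixed by the rotation of some orbit missing its finite support, impossible as each orbit has prime size $\ge 2$; hence $\operatorname{WOC}_n^-$ fails. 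To get $\operatorname{RC}_m$ I use $p\nmid m$: projecting an infinite set onto the atoms lying in orbits of a single size $p$ gives a symmetric infinite $Y$ in which every $z\in[Y]^m$ meets some orbit in a proper nonempty set, and a $\mathbb{Z}/p$--equivariant selection on the least--indexed such partial orbit defines a symmetric choice function on $[Y]^m$. I then transfer $\operatorname{RC}_m\wedge\lnot\operatorname{WOC}_n^-$ from this $\operatorname{ZFA}$--model to a model of $\operatorname{ZF}$ via Pincus' transfer theorems, after checking that both statements are of the required injectively boundable form.

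The step I expect to be hardest is the forward direction, specifically the case $s=n$ and, beneath it, the exact matching between the arithmetic hypothesis and the combinatorial reduction: one must show that the sizes of the ``irreducible'' pieces produced by iterating the Kinna--Wagner step are governed by admissible primes of $n$, so that $m$ is guaranteed to lie in the semigroup they generate and the block--marking argument can be carried out. A secondary difficulty is verifying $\operatorname{RC}_m$ in the permutation model for \emph{every} infinite set rather than only for the set of atoms, which will require a normal--form lemma describing infinite symmetric sets up to the available single--prime projections.
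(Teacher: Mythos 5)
Your independence direction follows the paper's architecture (prime-sized cyclic orbits, finite supports, Pincus transfer), but your forward direction takes a genuinely different route and it contains a fatal gap. You want to pigeonhole on the trace size $s=\lvert Y\cap F\rvert$, recurse on the pieces of size $s$ and $n-s$ via the induction hypothesis, and you justify this by claiming that the arithmetic condition ``descends to smaller sizes.'' It does not. Take $n=8$ and $m=6$: the condition holds for the pair $(6,8)$ (every prime set representing $8$ also represents $6$), but if the Kinna--Wagner step leaves you with pieces of sizes $3$ and $5$, the condition for $(6,3)$ fails ($3\nmid 6$ is false, fine, but $(6,5)$ fails since $5\nmid 6$), and more simply the condition for $(m,7)$ requires $7\mid m$ while the condition for $(m,8)$ does not --- the set of ``admissible primes'' can grow when $n$ shrinks. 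So you cannot recurse into a single piece. The paper avoids this by never splitting the problem: its Lemma \ref{partition} applies $\operatorname{RC}_m$ to the set of \emph{transversals} $E_F$ of the current partition of each $F$ and refines all pieces simultaneously, with the relevant primes being those dividing the current piece sizes; the descent is driven by a degree count on $[C_I]^p$ using Kummer's theorem (Lemma \ref{binom}), not by induction on $n$. Your $s=n$ case is also not a proof: marking one element of an $m$-element union of several disjoint $n$-sets selects an element of only one of them, so it does not yield a choice function on the subfamily; this case is exactly where the hard counting arguments live.

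In the permutation-model direction you extract from $m\notin\langle P\rangle$ only the consequence $p\nmid m$ for all $p\in P$, and your verification of $\operatorname{RC}_m$ rests on the claim that every $z\in[Y]^m$ then meets some orbit in a proper nonempty subset. That claim is false and the weaker hypothesis is insufficient: for $P=\{2,3\}$ and $m=5$ we have $2\nmid 5$ and $3\nmid 5$, yet $5=2+3\in\langle P\rangle$, a $5$-set can be a full $2$-orbit together with a full $3$-orbit (meeting no orbit properly), and indeed $\operatorname{RC}_5$ must \emph{fail} in $\mathcal{V}_{2,3}$ since $\operatorname{WOC}_5^-$ does. The proof must use $m\notin\langle P\rangle$ in full strength: the paper's Lemma \ref{model2} iteratively shrinks an $m$-set by selecting the elements of minimal ``distance profile'' within each orbit equivalence class, using at each stage that the cardinality of the surviving set is still not of the form $\sum c_ip_i$, until a singleton remains. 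Your remaining concern about handling arbitrary infinite sets (not just sets of atoms) is real but is the routine part, handled by minimal supports and the orbit decomposition $M_E$.
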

In order to prove the independence result, we use permutation models. See \cite{Halbeisen12} for basics about permutation models and $\operatorname{ZFA}$ ($\operatorname{ZF}$ with the Axiom of Extensionality modified to allow atoms). With Pincus' transfer theorems (see \cite{Pincus72}), we will be able to transfer the results obtained in $\operatorname{ZFA}$ to $\operatorname{ZF}$.\newline

Theorem \ref{maintheorem} gives us the following three special cases:
\begin{enumerate}
	\item For all $n\in\omega$ we have that $\operatorname{RC}_n\Rightarrow\operatorname{WOC}_n^-$. (See Corollary \ref{Cor6.3}.)
	\item Let $p$ be a prime number, $m\in\omega\setminus \{0\}$ and $n\in\omega\setminus \{0,1\}.$ Then 
$$
\operatorname{RC}_{p^m}\Rightarrow \operatorname{WOC}_n^-
$$
if and only if $n\mid p^m$ or $p=2$, $m=1$ and $n=4$. (See Corollary \ref{Cor8.4}.)
	\item If $\operatorname{RC}_m\not \Rightarrow\operatorname{WOC}_n^-$, we also have that $\operatorname{RC}_m\not\Rightarrow \operatorname{RC}_n^-$  and $\operatorname{RC}_m\not\Rightarrow \operatorname{C}_n^-$ (see Corollary \ref{spezialfall}). This generalizes Halbeisen's and Tachtsis' result ($\alpha$).
\end{enumerate}

In the second part of this paper, we will give some insights into the question what happens when we weaken the assumption that our family of $n$-element sets is well-ordered. We will prove that
$\operatorname{RC}_n\Rightarrow\operatorname{LOC}_n^-$ for both $n\in \{5,7\}$ and that $\operatorname{RC}_6\Rightarrow \operatorname{C}_n^-$ for both $n\in\{3,9\}$.

\section{Why $\operatorname{RC}_6$ implies $\operatorname{C}_3^-$}

In this section we will closely follow the proof of $\operatorname{RC}_4\Rightarrow \operatorname{C}_4^-$ in \cite{Montenegro99}.

\begin{prop}
\label{case33}
$\operatorname{RC}_6$ implies $\operatorname{C}_3^-$.
\end{prop}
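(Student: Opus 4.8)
The plan is to follow Montenegro's argument for $\operatorname{RC}_4\Rightarrow\operatorname{C}_4^-$, adapted to the fact that $6=2\cdot 3$. Let $\mathcal{F}$ be an infinite family of $3$-element sets. First I would reduce to the case that $\mathcal{F}$ is pairwise disjoint: replacing each $A\in\mathcal{F}$ by $\widehat{A}:=A\times\{A\}$ yields an infinite family $\widehat{\mathcal{F}}=\{\widehat{A}:A\in\mathcal{F}\}$ of pairwise disjoint $3$-element sets, and any choice function on an infinite subfamily of $\widehat{\mathcal{F}}$ pushes forward, via the first coordinate, to one on the corresponding infinite subfamily of $\mathcal{F}$. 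So assume $\mathcal{F}$ is pairwise disjoint and put $X:=\bigcup\mathcal{F}$; since $[X]^3$ would be finite if $X$ were, $X$ is infinite. Now apply $\operatorname{RC}_6$ to $X$ to obtain an infinite $Y\subseteq X$ together with a choice function $f\colon[Y]^6\to Y$, $f(s)\in s$.

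Next I would sort the members of $\mathcal{F}$ according to how much of $Y$ they meet. Because the members are disjoint and cover $Y$, the family $\mathcal{F}_Y:=\{A\in\mathcal{F}:A\cap Y\neq\emptyset\}$ is infinite (a finite $\mathcal{F}_Y$ would force $Y$ finite), so one of $\mathcal{F}_1,\mathcal{F}_2,\mathcal{F}_3$ is infinite, where $\mathcal{F}_i:=\{A\in\mathcal{F}:|A\cap Y|=i\}$. One of these cases is immediate: if $\mathcal{F}_1$ is infinite, then $A\mapsto$ (the unique point of $A\cap Y$) is a choice function on the infinite subfamily $\mathcal{F}_1$, which is the analogue of the ``$|A\cap Y|=1$'' case in Montenegro's proof. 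Note that, in contrast to the equal-$n$ situation, full containment $A\subseteq Y$ is \emph{not} immediate here, since $f$ only chooses from $6$-element sets; so $\mathcal{F}_3$ is genuinely part of the hard core.

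The substance is thus in the cases $\mathcal{F}_2$ and $\mathcal{F}_3$, where we are left with an infinite family of pairwise disjoint subsets of $Y$ of size $2$, respectively $3$, and must still select inside infinitely many of them. Here $6=2\cdot3$ is exactly what is needed: the union of three members of $\mathcal{F}_2$ (resp. two members of $\mathcal{F}_3$) is a $6$-element subset of $Y$, so $f$ applied to such unions yields a choice function on $[\mathcal{F}_2]^3$ (resp. $[\mathcal{F}_3]^2$), which additionally remembers which internal element was distinguished. The main obstacle is to upgrade this ``choice on triples/pairs of members, with an internal marker'' into a genuine element-selection on an infinite subfamily. Following Montenegro, I expect to resolve it by a combinatorial extraction — exploiting that the small sets have size at most $3$ and that $\operatorname{RC}_6$ still supplies choice to drive the construction — producing an infinite subfamily whose union (a subset of $Y$) carries a linear order; selecting the least element of each small set then gives the desired choice function. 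This extraction step, rather than the case analysis, is where the real work lies.
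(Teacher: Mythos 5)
Your setup coincides with the paper's: reduce to a pairwise disjoint family, apply $\operatorname{RC}_6$ to the union to get $Y$ and $f\colon[Y]^6\to Y$, and split according to $|A\cap Y|$. Two small remarks on the case analysis: the case $\mathcal{F}_2$ is not part of the ``hard core'' at all, since $|A\setminus Y|=1$ there and one simply chooses the unique point of $A\setminus Y$; so, as in the paper, everything reduces to $\mathcal{F}_3$, i.e.\ to an infinite disjoint family of $3$-element subsets of $Y$ with $f$ defined on all $6$-element subsets of their union.

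The genuine gap is that the entire content of the proposition lives in the step you defer with ``I expect to resolve it by a combinatorial extraction.'' Concretely, what is needed (and what the paper does) is: (a) turn $f$ on unions of pairs into a tournament on $\mathcal{F}_3$ (edge from $v$ to $u$ iff $f(u\cup v)\in u$) and run Montenegro's stratification to get an infinite $\mathcal{H}$ partitioned into finite outdegree classes $A_n$ of odd size, with all edges between classes pointing toward the lower class; and (b) a divisibility argument inside each $A_n$: if $3\nmid|A_n|$ one follows Montenegro's Claim, while if $3\mid|A_n|$ one computes that each $p_0\in A_n$ has indegree $s=\tfrac12(|A_n|-1)$ within $A_n$, notes that $3\mid 2s+1$ forces $3\nmid s$, and partitions the $s$ in-neighbours of $p_0=\{x_0,x_1,x_2\}$ according to which $x_i$ the function $f$ picked --- since the three class sizes sum to $s$ and $3\nmid s$, they cannot all be equal, which breaks the symmetry and selects an element of $p_0$. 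None of this appears in your proposal, and the endpoint you describe instead --- an infinite subfamily whose \emph{union} carries a linear order, from which you would take least elements --- is not what the argument produces and there is no reason to expect it to be obtainable here (indeed, linearly ordering the union is a much stronger conclusion than choosing one point per set). So the proposal is a correct frame around a missing proof, with the one substantive claim it does make about the finishing move pointing in the wrong direction.
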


\begin{proof}
Let $\mathcal{F}$ be an infinite family of pairwise disjoint sets of size $3$. We apply $\operatorname{RC}_6$ to the set $\bigcup\mathcal{F}$. This gives us an infinite subset $Y\subseteq\bigcup\mathcal{F}$ with a choice function on $[Y]^6$. For every $i\in\{1,2,3\}$ we define
$$
\mathcal{G}_i:=\{ u\in\mathcal{F}\mid \lvert u\cap Y\rvert=i\}.
$$
Without loss of generality we can assume that $\mathcal{G}:=\mathcal{G}_3$ is infinite. So there is a choice function
$$
f:\left [\bigcup\mathcal{G}\right ]^6\to\bigcup\mathcal{G}.
$$
We define a directed graph on $\mathcal{G}$ by putting a directed edge from $v$ to $u$ if and only if $f(u\cup v)\in u$. With this graph we do the same construction as in \cite{Montenegro99}. So there is an infinite subset $\mathcal{H}\subseteq\mathcal{G}$ which is partitioned into finite sets $(A_n)_{n\in\omega}$ such that for every $n\in\omega$, all elements in $A_n$ have outdegree $n$. Moreover, for all $n<m$ the edges between $A_n$ and $A_m$ all point from $A_m$ to $A_n$ and $\lvert A_n\rvert$ is odd for all $n\in\omega$. We can assume that we are in one of the following two cases:\newline

\noindent \textit{Case 1: }There are infinitely many $n\in\omega$ with $3\nmid\lvert A_n\rvert$.\newline
\noindent In this case we follow the proof of the Claim in \cite[p.~60]{Montenegro99} and we are done.\newline

\noindent \textit{Case 2: }For all $n\in\omega$ we have that $3\mid \lvert A_n\rvert$.\newline
\noindent Let $p_0\in \mathcal{H}$ and let $n\in\omega$ be the unique natural number with $p_0\in A_n$. There is an $s\in\omega$ with $\lvert A_n\rvert =2s+1.$ We want to find the number of elements in $A_n$ that point to $p_0$. There are $\binom{\lvert A_n\rvert}{2}$ edges in $A_n$. Since the number of edges in $A_n$ that point to an element in $A_n$ is the same for every element of $A_n$, we have that the indegree of $p_0$ in $A_n$ is given by
$$
\operatorname{indegree}_{A_n}(p_0)=\frac{1}{\lvert A_n\rvert}\binom{\lvert A_n\rvert}{2}=\frac{1}{2}(\lvert A_n\rvert -1)=s.
$$
By assumption we have that $3\mid \lvert A_n\rvert=2s+1.$ Therefore, $3\nmid s$. Assume that $p_0=\{ x_0,x_1, x_2\}$. For every $i\leq 2$ we define
$$
A_n^{x_i}\coloneqq\{ v\in A_n\mid f(v\cup p_0)=x_i\}.
$$
Since $3\nmid (\lvert A_n^{x_0}\rvert+\lvert A_n^{x_1}\rvert+\lvert A_n^{x_2}\rvert)=s$, we can choose an element from $p_0$.
\end{proof}
\section{Why $\operatorname{RC}_6$ implies $\operatorname{C}_9^-$}

\begin{lem}[Kummer]
\label{binom}
Let $n,k\in\omega\setminus\{ 0\}$. The exponent of a prime $p$ in the prime factorization of $\begin{pmatrix}n\\k\end{pmatrix}$ is equal to the number of $j\in\omega\setminus\{ 0\}$ such that the fractional part of $\frac{k}{p^j}$ is greater than the fractional part of $\frac{n}{p^j}$.
\end{lem}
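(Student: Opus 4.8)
The plan is to reduce the statement to Legendre's formula for the exponent of a prime in a factorial and then carry out a term-by-term comparison of floor functions. Write $\lfloor x\rfloor$ for the integer part and $\{x\}=x-\lfloor x\rfloor\in[0,1)$ for the fractional part of a real number $x$, and let $v_p(N)$ denote the exponent of $p$ in the prime factorization of a positive integer $N$. Recall Legendre's formula $v_p(N!)=\sum_{j\geq 1}\lfloor N/p^j\rfloor$, which follows by counting, for each $j$, the multiples of $p^j$ among $1,\dots,N$. Since $\binom{n}{k}=n!/(k!\,(n-k)!)$, this gives
$$
v_p\binom{n}{k}=\sum_{j\geq 1}\left(\left\lfloor\frac{n}{p^j}\right\rfloor-\left\lfloor\frac{k}{p^j}\right\rfloor-\left\lfloor\frac{n-k}{p^j}\right\rfloor\right),
$$
a sum with only finitely many nonzero terms, namely those with $p^j\leq n$.

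Next I would rewrite each summand using $\lfloor x\rfloor=x-\{x\}$. Because the arguments satisfy $n/p^j=k/p^j+(n-k)/p^j$, the ``$x$'' parts cancel and the $j$-th summand collapses to
$$
\left\{\frac{k}{p^j}\right\}+\left\{\frac{n-k}{p^j}\right\}-\left\{\frac{n}{p^j}\right\}.
$$
Since all three fractional parts lie in $[0,1)$, and since $\{k/p^j\}+\{(n-k)/p^j\}$ differs from $\{n/p^j\}$ by an integer (both being $n/p^j$ minus an integer), this quantity is itself an integer; it equals $0$ when $\{k/p^j\}+\{(n-k)/p^j\}<1$ and equals $1$ when $\{k/p^j\}+\{(n-k)/p^j\}\geq 1$. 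Hence $v_p\binom{n}{k}$ counts exactly those $j\geq 1$ for which $\{k/p^j\}+\{(n-k)/p^j\}\geq 1$.

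The final step, which carries the real content, is to show
$$
\left\{\frac{k}{p^j}\right\}+\left\{\frac{n-k}{p^j}\right\}\geq 1\iff\left\{\frac{k}{p^j}\right\}>\left\{\frac{n}{p^j}\right\}.
$$
For this I would pass to residues: writing $r=k\bmod p^j$ and $s=n\bmod p^j$, so that $\{k/p^j\}=r/p^j$ and $\{n/p^j\}=s/p^j$ with $0\leq r,s<p^j$, the value of $(n-k)\bmod p^j$ equals $s-r$ when $r\leq s$ and equals $s-r+p^j$ when $r>s$. Substituting, the sum of the two fractional parts is $s/p^j<1$ in the first case and $s/p^j+1\geq 1$ in the second. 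Thus the sum reaches $1$ precisely when $r>s$, i.e.\ precisely when $\{k/p^j\}>\{n/p^j\}$. Combining this equivalence with the count from the previous paragraph yields the lemma. The only mild subtlety is keeping the case distinction $r\leq s$ versus $r>s$ straight; once the residue computation is set up, the verification is routine.
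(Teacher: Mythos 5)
Your proof is correct. The paper itself gives no proof of this lemma --- it is attributed to Kummer and used as a known fact (the bibliography cites Kummer's 1852 paper) --- so there is nothing to compare against; your argument via Legendre's formula $v_p(N!)=\sum_{j\geq 1}\lfloor N/p^j\rfloor$ is the standard derivation and is complete. The three steps all check out: the telescoping of the linear parts leaves $\{k/p^j\}+\{(n-k)/p^j\}-\{n/p^j\}$, which is an integer in $\{0,1\}$ and equals $1$ exactly when the two fractional parts sum to at least $1$; and the residue computation with $r=k\bmod p^j$, $s=n\bmod p^j$ correctly shows this happens precisely when $r>s$, i.e.\ when $\{k/p^j\}>\{n/p^j\}$. (You implicitly use $k\leq n$ so that $n-k\geq 0$; this is forced by the hypothesis that $\binom{n}{k}$ has a prime factorization, so it is not a gap.)
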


\begin{lem}
\label{lem3.2}
Let $\mathcal{F}$ be an infinite family of pairwise disjoint $4$-element sets and let 
$$
f:\left [ \bigcup \mathcal{F}\right]^6\to\bigcup\mathcal{F}
$$
be a choice function. Then there is a function $h$ with $h(p\cup q)\in p\cup q$ for all $p\not = q$ in $\mathcal{F}$. 
\end{lem}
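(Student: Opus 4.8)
The plan is to fix a pair $p\neq q$ in $\mathcal{F}$ and to select one element of the $8$-element set $X:=p\cup q$ using only the restriction of $f$ to $[X]^6$ (together, if needed, with the partition $X=p\cupdisjoint q$, which is available since $\mathcal{F}$ consists of pairwise disjoint sets). Restricting $f$ to $[X]^6$ amounts to a function $g\colon [X]^2\to X$ with $g(e)\notin e$ for every $e$, namely $g(e)=f(X\setminus e)$; so the task is to turn such a ``codimension-$2$ selector'' on an $8$-set into a genuine element, in a way that is uniform in $\{p,q\}$ and uses no further choice. Doing this for every pair then defines $h$.

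First I would attach to each $x\in X$ the score
\[
c(x):=\bigl\lvert\{S\in[X]^6 : f(S)=x\}\bigr\rvert ,
\]
and record that $\sum_{x\in X}c(x)=\binom{8}{6}=28$, since every $6$-subset contributes exactly one chosen element. Because $8\nmid 28$, the scores cannot all be equal, so $c$ induces a nontrivial canonical partition of $X$ into score-classes, linearly ordered by the value of the score; in particular the class of maximal score is a canonically defined nonempty proper subset of $X$, and if it is a singleton we are done.

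The main obstacle is tie-breaking: the scores may genuinely coincide on several elements (for instance the profile $4,4,4,4,3,3,3,3$ sums to $28$ and is realizable by some selector $g$), so a single round of scoring need not isolate an element. Moreover the codimension-$2$ structure is not hereditary — knowing $g$ on $[X]^2$ does not hand us a selector on $[Y]^{|Y|-2}$ for a proper $Y\subsetneq X$ — so one cannot naively recurse on the maximal-score class. The remedy is to refine the count rather than the set: one splits the $28$ subsets according to how they meet the blocks $p$ and $q$ (the types $(4,2)$, $(2,4)$ and $(3,3)$, of sizes $6,6,16$), and forms the corresponding finer scores. Here Lemma~\ref{binom} is exactly the tool that controls the prime factorizations of the binomial coefficients governing these block-counts, guaranteeing that at each stage the relevant total fails to be divisible by the size of the current tie-class; this keeps the canonical refinement a proper subset and drives the size strictly down until a single element survives. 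Carrying out this refinement — and in particular checking the tightest residual configurations, where divisibility is most delicate — is the technical heart of the argument; once an element is singled out uniformly for every $\{p,q\}$, setting $h(p\cup q)$ to be that element yields the desired function.
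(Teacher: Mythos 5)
Your opening move coincides with the paper's: score each $x\in p\cup q$ by the number of the $28$ six-element subsets on which $f$ picks $x$, and use $8\nmid 28$ to obtain a canonical, nonempty, proper tie-class $A$. The gap is everything after that. Your proposed tie-breaker --- splitting the $28$ subsets by their trace on the blocks $p$ and $q$ (types of sizes $6$, $6$, $16$) and refining the scores --- comes with no argument that it terminates in a singleton, and the divisibility claim you lean on is wrong: the relevant totals $6$, $6$ and $16$ are all divisible by $2$, and $16$ is divisible by $4$, so nothing forces the refined scores to be non-constant on a tie-class of size $2$ or $4$. Worse, if the induced map $e\mapsto f((p\cup q)\setminus e)$ happens to be equivariant under a transposition of two elements $x_1,x_2$ of $p$ (such configurations exist), then \emph{every} score definable from the restriction of $f$ to $[p\cup q]^6$ together with the unordered partition $\{p,q\}$ assigns $x_1$ and $x_2$ the same value; no amount of re-scoring can then split a tie-class $\{x_1,x_2\}$, and one must instead produce an element \emph{outside} the tie-class. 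Finally, Lemma \ref{binom} plays no role in this lemma; in the paper it is invoked only later (Case 3 of Lemma \ref{lem:2}).

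What is missing is the paper's actual tie-breaking device: feeding freshly assembled $6$-element sets back into $f$, rather than refining counts. If $\lvert A\rvert=2$, take $f((p\cup q)\setminus A)$, a legitimate $6$-set. If $\lvert A\rvert=3$ (or, after replacing $A$ by its complement, $5,6,7$), intersect with the blocks: one of $A\cap p$, $A\cap q$, $p\setminus A$, $q\setminus A$ is a singleton. If $\lvert A\rvert=4$, apply $f$ to the six sets $A\cup B$ with $B\in[(p\cup q)\setminus A]^2$ and re-score; since $4a+4b=6$ has no solution in nonnegative integers, the most- and least-often-chosen classes cannot both have size $4$, which reduces to the earlier cases. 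Without these moves --- in particular the idea of applying $f$ to $(p\cup q)\setminus A$ and to $A\cup B$ --- your argument does not close.
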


\begin{proof}
Let $p\not = q$ be elements of $\mathcal{F}$. We will  show that we can choose exactly one element from $p\cup q$. There are 
$$
\begin{pmatrix}8\\6\end{pmatrix}=28
$$
$6$-element subsets of $p\cup q$. From each of these subsets we can choose one point with the choice function $f$. Let $A$ be the set of all elements in $p\cup q$ which are chosen the most times. Note that $1\leq\lvert A\rvert\leq 7$, because $8$ does not divide $28$. 

\renewcommand\labelitemi{{\boldmath$\cdot$}}
\begin{itemize}
	\item If $\lvert A\rvert=1$ we are done.
	\item If $\lvert A\rvert=2$, choose $f((p\cup q)\setminus A)$.
	\item If $\lvert A\rvert=3$ and $A\subseteq p$ or $A\subseteq q$ we are done because we can choose the point in $p\setminus A$ or in $q\setminus A$. Otherwise, $\lvert p\cap A\rvert=1$ or $\lvert q\cap A\rvert=1$ and we are also done.
	\item If $\lvert A\rvert\in\{5,6,7\}$, replace $A$ by $(p\cup q)\setminus A$. So we are in one of the cases above.
	\item If $\lvert A\rvert=4$, the set $[ (p\cup q)\setminus A]^2$ contains $\begin{pmatrix}4\\ 2\end{pmatrix}=6$ elements. For each $B\in [ (p\cup q)\setminus A]^2$ choose $f(A\cup B)$. Let $C_0$ and $C_1$ be the sets of all elements in $p\cup q$ which are chosen the most and the least often. Note that either $C_0$ or $C_1$ does not contain $4$ elements. By the cases above we are done.
\end{itemize}
So there is a choice function
$$
h:\{ p\cup q\mid p,q\in\mathcal{F}\}\to \bigcup \mathcal{F}.
$$
\end{proof}

\begin{lem}
\label{lem:2}
Let $\{ A_n\mid n\in\mathbb{N}\}$ be a countable family of pairwise disjoint non-empty finite sets of pairwise disjoint sets of size 2. Moreover, we assume that $\mathcal{F}\coloneqq\bigcup_{n\in\omega} A_n$ is an infinite family of $2$-element sets with a choice function 
$$
f:\left [\bigcup\mathcal{F}\right]^6\to\bigcup\mathcal{F}.
$$
Then there is an infinite subfamily $\mathcal{G}\subseteq\mathcal{F}$ with a choice function.
\end{lem}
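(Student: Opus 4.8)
The plan is to reduce the statement to the following selection task: it suffices to produce an infinite sequence $(S_j)_{j\in\omega}$ of pairwise disjoint finite subsets of $\bigcup\mathcal{F}$, each a union of members of $\mathcal{F}$, together with a canonically chosen point $x_j\in S_j$. Indeed, since the members of $\mathcal{F}$ are pairwise disjoint $2$-element sets, the pair containing $x_j$ is uniquely determined by $x_j$; distinct $j$ give distinct pairs because the $S_j$ are disjoint; and mapping each such pair to its point $x_j$ is a choice function on an infinite subfamily $\mathcal{G}\subseteq\mathcal{F}$. From now on I only need canonical point-selections on suitable finite unions of pairs. To organise these, I would write $k_n:=\lvert A_n\rvert$ and $B_n:=\bigcup A_n$ and split the blocks by size: since $n\mapsto k_n$ is a function $\omega\to\omega$, using only the well-ordering of $\omega$ (no choice) I can extract either (a) an infinite $I\subseteq\omega$ on which $k_n$ is constantly some $k$, or (b) an infinite $I$ on which $k_n$ is strictly increasing.

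The easiest sub-case is (a) with $k=1$: here each $B_n$ is a single pair, and grouping the blocks indexed by $I$ three at a time along the order of $\omega$ yields sets $S_j$ with exactly six points, so a single application of $f$ gives $x_j:=f(S_j)$. This settles the case where infinitely many blocks are singletons. For the remaining cases I would imitate the counting argument of Lemma \ref{lem3.2}. For a finite $S\subseteq\bigcup\mathcal{F}$ with $\lvert S\rvert\geq 6$ let $A(S)$ be the set of points of $S$ selected by $f$ from the largest number of sets in $[S]^6$; this is canonical. Using Kummer's Lemma \ref{binom} to compute the $p$-adic valuation of $\binom{\lvert S\rvert}{6}$, I would verify that for the sizes actually occurring the plurality set $A(S)$ is a nonempty proper subset of $S$, so that passing from $S$ to $A(S)$ strictly decreases the size; iterating produces a canonical set of size between $1$ and $5$. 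When this set is a singleton we are done, and otherwise I would finish by the same case distinction on $\lvert A\rvert$ as in Lemma \ref{lem3.2}, now using the partition of $B_n$ into pairs to reduce each case to a point. In case (b) the sizes $2k_n$ grow, which is exactly what makes the valuation computation yield a proper plurality set, so the same shrinking applies inside each block.

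The main obstacle is precisely the step guaranteeing $A(S)\subsetneq S$: if $f$ selected every point of $S$ equally often, the plurality set would be all of $S$ and the recursion would stall, and once the size drops below $6$ the function $f$ can no longer be applied at all. Ruling out this total symmetry for the relevant sizes is where the divisibility input of Kummer's Lemma \ref{binom} is essential, and the low-size residue is handled by the pair-partition structure together with the freedom to merge several blocks (legitimate because they are indexed by $\omega$) into a union of controlled size before applying $f$. The fixed-size sub-case (a) with $k\geq 2$ is the most delicate, since there the block sizes never grow and the valuation argument must be run for the single value $2k$; when $2k=4$ one can alternatively invoke Lemma \ref{lem3.2} directly, treating the four-point blocks as $4$-element sets and then selecting among them by a tournament argument as in Proposition \ref{case33}.
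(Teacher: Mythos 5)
There is a genuine gap at the heart of your argument, namely the step asserting that the plurality set $A(S)\subsetneq S$. For a set $S$ of $N$ points, every point of $S$ being chosen equally often by $f$ over $[S]^6$ is only excluded by counting when $N\nmid\binom{N}{6}$, i.e.\ when $6\nmid\binom{N-1}{5}$. This fails for infinitely many $N$: already for $N=10$ and $N=12$ (and e.g.\ $N=20$) one has $N\mid\binom{N}{6}$, so a block of $5$ or $6$ pairs can in principle be treated with total symmetry by $f$ and the recursion stalls at the very first step. Your claim that Kummer's Lemma \ref{binom} rules out this symmetry ``for the sizes actually occurring'' is therefore false, and the assertion in case (b) that growing block sizes are ``exactly what makes the valuation computation yield a proper plurality set'' is also false, since arbitrarily large $N$ with $N\mid\binom{N}{6}$ occur. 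The fallback you offer --- ``the pair-partition structure together with the freedom to merge several blocks'' --- is not an argument: merging blocks just replaces $N$ by some $N'$ for which the same divisibility question must be answered, and nothing guarantees a favourable $N'$ within a canonically bounded number of merges. The constant-size sub-case with $k\geq 2$, which you yourself flag as most delicate, is exactly where this bites and is left unresolved (the $2k=4$ detour through Lemma \ref{lem3.2} and a tournament ends, in the bad branch, with an infinite family of distinguished pairs, i.e.\ the original problem again).

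The paper avoids this trap by never running the plurality argument on $[S]^6$ for a single block of points. Instead it cases on the number of pairs $\lvert A_n\rvert$ modulo $2$ and $4$ and counts at the level of pairs: when all $\lvert A_n\rvert$ are odd it combines three blocks and uses that a product of odd numbers is odd to find a pair $a$ with $\#a$ odd (whence the two points of $a$ are chosen unequally often); when $\lvert A_n\rvert=2$ it reduces to the odd case; when $2\mid\lvert A_n\rvert$ but $4\nmid\lvert A_n\rvert$ it combines two blocks, uses Kummer to see $\binom{\lvert A_n\rvert}{2}$ is odd, and applies the auxiliary function $h$ of Lemma \ref{lem3.2} to unions of two pairs from each block; and when $4\mid\lvert A_n\rvert$ it uses $[A_n]^3$ (unions of three pairs, hence $6$ points) together with the divisibility $2\lvert A_n\rvert\nmid\binom{\lvert A_n\rvert}{3}$, which holds because the relevant numerator is odd, and then iterates on the pair-level plurality set $B_n\subsetneq A_n$. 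If you want to salvage your approach you would need to replace the single valuation check on $\binom{N}{6}$ by this kind of case analysis on the number of pairs; as written, the proof does not go through.
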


\begin{proof}
We can assume that we are in one of the following four cases:\newline

\noindent \textit{Case 1: }For all $n\in\omega$ we have that $2\nmid\lvert A_n\rvert$.\newline
\noindent Let $k\in\omega$. Then there are natural numbers $l_0,l_1$ and $l_2$ such that
$$
\lvert A_{3k}\rvert=2l_0+1,~\lvert A_{3k+1}\rvert= 2l_1+1~\text{ and }~\lvert A_{3k+2}\rvert=2l_2+1.
$$
For every $a\in A_{3k}\cup A_{3k+1}\cup A_{3k+2}$ define
$$
\# a\coloneqq\lvert\{ (a_0,a_1,a_2)\in A_{3k}\times A_{3k+1}\times A_{3k+2}\mid f(a_0\cup a_1\cup a_2)\in a\}\rvert.
$$
If $\# a$ is odd, we can choose an element from $a$, for example the element in $a$ we choose more often than the other. Since 
$$
2\nmid\prod_{i\leq 2} (2l_i+1)~\text{ and }~\smashoperator[r]{\sum_{a\in A_{3k}\cup A_{3k+1}\cup A_{3k+2}}} ~\#a~~~~=\prod_{i\leq 2} (2l_i+1),
$$
we have that for every $k\in\omega$ there is at least one $a\in A_{3k}\cup A_{3k+1}\cup A_{3k+2}$ such that $\# a$ is odd. So we can find a choice function on the infinite set
$$
\mathcal{G}\coloneqq\{ a\in\mathcal{F}\mid \# a \text{ is odd}\}.
$$

\noindent \textit{Case 2: }For all $n\in\omega$ we have that $\lvert A_n\rvert=2$.\newline
\noindent For every $k\in\omega$ there are two distinct 3-element subsets $B_0$ and $B_1$ of $A_{2k}\cup A_{2k+1}$ such that $A_{2k+1}\subseteq B_0$ and $A_{2k+1}\subseteq B_1$. For every $a\in A_{2k}\cup A_{2k+1}$ we define
$$
\# a\coloneqq\left\lvert\left\{ i\in \{0,1\}\mid f\left(\bigcup B_i\right)\in a\right\}\right\rvert.
$$
Note that if $\# a$ is odd we can choose an element from $a$. So if there are infinitely many $a\in\mathcal{F}$ such that $\# a$ is odd, we are done. Otherwise, there is an infinite subset $I\subseteq\omega$ such that for all $k\in I$ there is a unique $a_k\in A_{2k}\cup A_{2k+1}$ with $\# a_k=2$. Then we are in the first case for the family $\{ \{ a_k\}\mid k\in I\}.$\newline

\noindent \textit{Case 3: }For all $n\in\omega$ we have that $\lvert A_n\rvert\geq 3$, $4\nmid \lvert A_n\rvert$ and $2\mid \lvert A_n\rvert$.\newline
\noindent Let $n\in\omega$. By Lemma \ref{binom}, $\begin{pmatrix}\lvert A_n\rvert\\ 2\end{pmatrix}$ is odd.  For every $k\in\omega$ we look at the 4-element subsets of $A_{2k}\cup A_{2k+1}$ with two elements in $A_{2k}$ and two elements in $A_{2k+1}$. Note that the number of such subsets is odd. Let $h$ be the choice function we found in Lemma \ref{lem3.2}. Then for every $k\in\omega$ there is at least one $a\in A_{2k}\cup A_{2k+1}$ such that
$$
\# a\coloneqq\lvert\{ (\{ a_0,a_1\}, \{ b_0, b_1\})\in [A_{2k}]^2\times [A_{2k+1}]^2\mid h(a_0\cup a_1\cup b_0\cup b_1)\in a\}\rvert
$$
is odd. So again we found a choice function on the infinite set
$$
\mathcal{G}\coloneqq\{ a\in\mathcal{F}\mid \# a\text{ is odd}\}.
$$

\noindent \textit{Case 4: }For all $n\in\omega$ we have that $\lvert A_n\rvert\geq 3$ and $4\mid \lvert A_n\rvert$.\newline
\noindent Let $n\in\omega$. Then there is a $k\in\omega$ with $\lvert A_n\rvert= 4k$. We have that
\begin{equation}
\label{eq:1}
2\lvert A_n\rvert\nmid \binom{\lvert A_n\rvert}{3},
\end{equation}
since otherwise we would have that 
$$
\frac{\lvert A_n\rvert(\lvert A_n\rvert-1)(\lvert A_n\rvert-2)}{2\cdot\lvert A_n\rvert\cdot 2\cdot 3}=\frac{ 2(4k^2-3k)+1}{2\cdot 3}\in\omega.
$$
But this is not the case since the numerator is not divisible by 2. We define
$$
\# a\coloneqq\lvert\{ \{ a_0,a_1,a_2\}\in [A_n]^3\mid f(a_0\cup a_1\cup a_2)\in a\}\rvert
$$
and for all $y \in \bigcup A_n$ let
$$
\operatorname{no}(y)\coloneqq\lvert\{ \{ a_0,a_1,a_2\}\in [A_n]^3\mid f(a_0\cup a_1\cup a_2)=y\}\rvert.
$$
Note that by $(\ref{eq:1})$ 
$$
\left\lvert\left\{ y\in \bigcup A_n\mid \operatorname{no}(y)=\max\left\{ \operatorname{no}(z)\mid z\in \bigcup A_n\right\}\right\}\right\rvert<2\lvert A_n\rvert.
$$
If there is an $a=\{ a_0,a_1\}\in A_n$ with 
$$
\operatorname{no}(a_0)\not = \operatorname{no}(a_1)
$$
choose the element with lower $\operatorname{no}$. Otherwise we have that
$$
B_n\coloneqq\{ a\in A_n\mid \# a=\max\{ \# b\mid b\in A_n\}\}\subsetneq A_n.
$$
Repeat the procedure  with $A_n\coloneqq B_n$ until either $4\nmid \lvert A_n\rvert$ or there is an $a=\{ a_0,a_1\}\in A_n$ with 
$$
\operatorname{no}(a_0)\not = \operatorname{no}(a_1).
$$
Note that we have to repeat the procedure at most $\lvert A_n\rvert$ times. In the end we either found a choice function on an infinite subset of $\mathcal{F}$ or we reduced Case 4 to one of the other cases.
\end{proof}

\begin{cor}
\label{cor:3.4}
Let $\mathcal{F}$ be an infinite family of pairwise disjoint $4$-element sets and let 
$$
f:\left [ \bigcup \mathcal{F}\right]^6\to\bigcup\mathcal{F}
$$
be a choice function. Then there is an infinite subset $\mathcal{G}\subseteq\mathcal{F}$ with a choice function on $\mathcal{G}$.
\end{cor}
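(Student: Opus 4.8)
The plan is to deduce the corollary by feeding the output of Lemma \ref{lem3.2} into Lemma \ref{lem:2}. Lemma \ref{lem3.2} converts the choice function $f$ on $6$-subsets into a function $h$ that selects a single point from every pairwise union $p\cup q$ of members of $\mathcal{F}$, while Lemma \ref{lem:2} settles families of pairwise disjoint $2$-element sets that already carry a countable, finite-cluster structure. So the task reduces to two things: extract from $\mathcal{F}$ a countable subfamily equipped with such a cluster skeleton, and shrink the $4$-element sets down to $2$-element sets so that Lemma \ref{lem:2} becomes applicable.

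First I would build the skeleton exactly as in Proposition \ref{case33}, but using $h$ in place of $f$ (since $p\cup q$ now has $8$ rather than $6$ elements). Orienting each pair $\{p,q\}$ of distinct members of $\mathcal{F}$ towards whichever of $p,q$ contains $h(p\cup q)$ defines a tournament on $\mathcal{F}$, and Montenegro's construction from \cite{Montenegro99} then yields an infinite subfamily $\mathcal{H}\subseteq\mathcal{F}$ partitioned into finite levels $(A_n)_{n\in\omega}$, with all cross-edges between distinct levels pointing from the higher to the lower level and with each induced subtournament on $A_n$ regular, so that $\lvert A_n\rvert$ is odd. This is precisely the countable finite-cluster data required by Lemma \ref{lem:2}, except that its members have size $4$ instead of $2$.

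Next I would perform a Kinna--Wagner reduction from size $4$ to size $2$, level by level. Inside a level with $\lvert A_n\rvert=2s+1$ the in-degree of each $p\in A_n$ equals $s$, so for $p\in A_n$ and $x\in p$ the counts $c(x)\coloneqq\lvert\{q\in A_n\mid q\neq p,\ h(p\cup q)=x\}\rvert$ satisfy $\sum_{x\in p}c(x)=s$; hence the set of most-often-chosen points of $p$ is a proper nonempty subset of $p$ unless all four counts coincide, which forces $4\mid s$. If the selected subset has size $1$ or $3$ for infinitely many $p$, I read off a choice function directly (choose the selected point, respectively the unique point of the singleton complement) on an infinite subfamily and am done. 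Otherwise all but finitely many successful reductions have size $2$, producing an infinite family of pairwise disjoint $2$-element sets that inherits the cluster structure of the levels and on whose union $f$ still chooses from $6$-subsets; Lemma \ref{lem:2} then delivers the desired infinite subfamily with a choice function, which lifts back to $\mathcal{H}\subseteq\mathcal{F}$.

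The delicate point is the divisibility gap in this last reduction: when $4\mid s$ the plain count over single opponents $q$ may fail to separate the four points of $p$, leaving some $4$-sets indistinguishable. As in Cases 3 and 4 of Lemma \ref{lem:2}, the remedy is to replace the count over single $q$'s by counts over larger configurations and to invoke Kummer's Lemma \ref{binom} to exhibit a binomial coefficient of the correct $2$-adic valuation, iterating the refinement (passing to the sub-level of extremal $p$'s) finitely many times until either the points of $p$ get separated or the cluster sizes fall into an already-handled case. The main obstacle is thus verifying that this iteration always terminates and never strands us with a genuinely indistinguishable $4$-element set; once that is secured, the remainder is routine bookkeeping resting on Lemmas \ref{lem3.2} and \ref{lem:2} together with the tournament construction.
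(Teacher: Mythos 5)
Your route is the paper's route: Lemma \ref{lem3.2} supplies the pair-selector $h$, the tournament on $\mathcal{F}$ defined from $h$ plus Montenegro's construction yields the levels $(A_n)_{n\in\omega}$ of odd size with regular induced subtournaments, from each member one extracts a nonempty proper subset, sizes $1$ and $3$ finish immediately, and the size-$2$ case hands the level structure to Lemma \ref{lem:2}. This is exactly what the paper's (very terse) proof does.

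The one point at which your write-up is not yet a proof is the case you yourself flag: for $p\in A_n$ with $\lvert A_n\rvert=2s+1$ and $4\mid s$, the four within-level counts $c(x)$ may all coincide, and your proposed remedy (counts over larger configurations, Kummer's Lemma \ref{binom}, an iterated passage to sub-levels) is left unverified --- you explicitly state that termination of the iteration is the main obstacle. That machinery is not needed. Instead of counting in-neighbours of $p$ inside its own level, count over the next nonempty level above it: if $p\in A_n$ and $A_m$ is the least nonempty level with $m>n$, then every $q\in A_m$ satisfies $h(p\cup q)\in p$ (all cross-edges point downward), so the counts $\lvert\{q\in A_m\mid h(p\cup q)=x\}\rvert$ for $x\in p$ sum to the odd number $\lvert A_m\rvert$ and hence cannot all be equal. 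The set of most-often-chosen points of $p$ is therefore always a nonempty proper subset of size $1$, $2$ or $3$, uniformly in $p$, with no divisibility caveat. Substituting this for your within-level count closes the gap and makes the rest of your argument go through verbatim; it is also, in effect, what the paper is appealing to when it cites the construction of \cite{Montenegro99}.
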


\begin{proof}
Let $h$ be the choice function we found in Lemma \ref{lem3.2}. We can define a complete, directed graph on $\mathcal{F}$ by putting an edge from $p$ to $q$ if and only if $g(p\cup q)\in q$. With this graph we can do the same construction as in \cite{Montenegro99}. So we can find an infinite subset $\mathcal{G}\subseteq\mathcal{F}$ such that we can choose exactly $1$ or $2$ elements from each $G\in\mathcal{G}$. So either we found a choice function on an infinite subset of $\mathcal {G}$ or we can find an infinite family of $2$-element sets $\mathcal{H}$. Then we apply Lemma \ref{lem:2} to $\mathcal{H}$ and we are done.
\end{proof}

\begin{lem}
\label{lemma225}
Let $\mathcal{F}$ be an infinite family of $10$-element sets. Assume that each $F\in\mathcal{F}$ is a disjoint union of five $2$-element sets $F_i$, $0\leq i\leq 4$. Moreover, let 
$$
f:\left[\bigcup \mathcal{F} \right ]^6\to \bigcup \mathcal{F}
$$
be a choice function. Then there is an infinite subset $\mathcal{G}\subseteq\mathcal{F}$ with a Kinna-Wagner selection function.
\end{lem}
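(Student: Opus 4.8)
The plan is to forget the $10$-element sets and pass to the family of their constituent pairs, so that the statement becomes a direct instance of Lemma \ref{lem:2}. First I would note that, by the usual reduction (replacing each $F$ by $\{F\}\times F$ if necessary), we may assume the members of $\mathcal{F}$ are pairwise disjoint. For each $F\in\mathcal{F}$ put $A_F:=\{F_0,F_1,F_2,F_3,F_4\}$, the set of its five $2$-element blocks, and let $\mathcal{P}:=\bigcup_{F\in\mathcal{F}}A_F$ be the family of all these pairs. Since distinct members of $\mathcal{F}$ are disjoint and the blocks inside a fixed $F$ are disjoint, every pair in $\mathcal{P}$ is pairwise disjoint from the others, each $A_F$ is a nonempty finite set of pairwise disjoint $2$-element sets, the $A_F$ are pairwise disjoint, and $\mathcal{P}$ is an infinite family of $2$-element sets with $\bigcup\mathcal{P}=\bigcup\mathcal{F}$. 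In particular the given $f$ is already a choice function $f\colon[\bigcup\mathcal{P}]^{6}\to\bigcup\mathcal{P}$, so $\mathcal{P}$ together with the partition $\{A_F:F\in\mathcal{F}\}$ meets all the hypotheses of Lemma \ref{lem:2}.

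Applying Lemma \ref{lem:2} yields an infinite subfamily $\mathcal{G}'\subseteq\mathcal{P}$ together with a choice function $c\colon\mathcal{G}'\to\bigcup\mathcal{P}$. I would then set
$$
\mathcal{G}:=\{F\in\mathcal{F}\mid A_F\cap\mathcal{G}'\neq\emptyset\}.
$$
Because $\mathcal{G}'$ is infinite while each $F$ contributes at most five pairs to $\mathcal{P}$, the family $\mathcal{G}$ is infinite. For $F\in\mathcal{G}$ define
$$
g(F):=\{\,c(F_i)\mid 0\leq i\leq 4,\ F_i\in\mathcal{G}'\,\}.
$$
Then $g(F)\neq\emptyset$ since at least one block of $F$ lies in $\mathcal{G}'$, and $g(F)\subsetneq F$ because $g(F)$ contains at most one point from each of the five disjoint blocks, so $\lvert g(F)\rvert\leq 5<10=\lvert F\rvert$. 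Hence $g$ is a Kinna-Wagner selection function on the infinite subfamily $\mathcal{G}$, which is exactly what is required.

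The conceptual point is simply that a \emph{choice} function on the pairs upgrades for free to a Kinna-Wagner \emph{selection} on the $10$-element sets: picking one point out of each of several disjoint blocks can never exhaust a single block, let alone all of $F$, so the selected set is automatically a nonempty proper subset. Consequently the only step requiring care is feeding $\mathcal{P}$ into Lemma \ref{lem:2}: that lemma is stated for a countably indexed partition $\{A_n\mid n\in\mathbb{N}\}$, so one must first have $\{A_F:F\in\mathcal{F}\}$ enumerated by $\omega$. This matching of the countability hypothesis is the one delicate point, and it is where I expect the genuine obstacle to lie; everything else (disjointness of the $A_F$, the restriction of $f$, infinitude of $\mathcal{G}$, and that $g$ lands on nonempty proper subsets) is routine verification.
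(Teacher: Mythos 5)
Your reduction to Lemma \ref{lem:2} founders on exactly the point you flag in your last sentence, and that point is not a delicate verification but the entire content of the lemma you are trying to prove. Lemma \ref{lem:2} requires the finite blocks of pairs to be given as an $\omega$-indexed sequence $\{A_n\mid n\in\mathbb{N}\}$. In your setup the partition $\{A_F\mid F\in\mathcal{F}\}$ is indexed by $\mathcal{F}$ itself, which is an arbitrary infinite family in $\operatorname{ZF}$: it need not be countable, need not be well-orderable, and, in the absence of any choice, need not even contain a countably infinite subfamily that you could isolate. So there is no way to ``enumerate $\{A_F: F\in\mathcal{F}\}$ by $\omega$'', and the hypothesis of Lemma \ref{lem:2} cannot be met. (In the one place Lemma \ref{lemma225} is applied, namely Case 4 of the proof that $\operatorname{RC}_6$ implies $\operatorname{C}_9^-$, the family really is arbitrary, so this is not a removable hypothesis.)

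What the paper does instead is use the choice function $f$ itself to manufacture an $\omega$-indexed stratification. For each $4$-element set $A\subseteq\bigcup\mathcal{F}$ it defines $\operatorname{deg}(A)$ as the number of blocks $F_i$ with $f(A\cup F_i)\in F_i$. If some $A_0$ has infinite degree, one immediately gets a Kinna--Wagner selection on the infinitely many $F$ it hits. Otherwise every degree is a natural number, and there are two cases: if some fixed value $n$ is attained by $\operatorname{deg}(A\cup B)$ for blocks $A,B$ of infinitely many $F$, a directed-graph argument on the five blocks of each such $F$ produces a selection of at most five points; if every value is attained only finitely often, the level sets of the degree function partition the blocks into finite sets indexed by $n\in\omega$, and only at that point does Lemma \ref{lem:2} become applicable. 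Your closing step --- upgrading a choice function on infinitely many constituent pairs to a Kinna--Wagner selection on the corresponding $10$-element sets --- is correct and matches what the paper does implicitly, but the missing enumeration is the heart of the proof, not a loose end.
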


\begin{proof}
For all $4$-element sets $A\subseteq\bigcup\mathcal{F}$, we define the degree of $A$ by
$$
\operatorname{deg}(A):=\lvert\{ F_i\mid F\in\mathcal{F}\land i\leq 4\land f(A\cup F_i)\in F_i\}\rvert.
$$
If there is an $A_0\in\left [\bigcup \mathcal{F}\right ]^4$ with infinite degree we are done. Because then the set 
$$
\mathcal{G}:=\{ F\in\mathcal{F}\mid \exists i\leq 4~(f(A_0\cup F_i)\in F_i)\}
$$ 
is infinite and from every $G\in\mathcal{G}$ we can choose the set 
$$
\{ f(A_0\cup G_i)\mid i\leq 4\}\cap G\subsetneq G.
$$
So we can assume that each $A\in\left [\bigcup \mathcal{F}\right ]^4$ has finite degree. Define $
\mathcal{F}^2=\{ F_i\mid i\leq 4\land F\in\mathcal{F}\}$ and for all $F\in\mathcal{F}$ let $F^2:=\{ F_i\mid i\leq 4\}$.

\noindent \textit{Case 1: }There is an $n\in\omega$ such that for infinitely many $F\in\mathcal{F}$ there are $A,B\in F^2$ with $\operatorname{deg}(A\cup B)=n$.\newline
Let $\mathcal{G}:=\{ F\in\mathcal{F}\mid \exists A,B\in F^2(\operatorname{deg}(A\cup B)=n)\}$. By assumption this is an infinite set. Choose an $(n+3)$-element set $\{X_i\mid i\leq n+2\}\subseteq\mathcal{F}^2$. For all $G\in\mathcal{G}$ and all $A,B\in G^2$ with $\operatorname{deg}(A\cup B)=n$ put an edge pointing from $A$ to $B$ if and only if
$$
f(A\cup B\cup X_{i_0})\in B,
$$
where
$$
i_0:=\min \{ i\leq n+2\mid f(A\cup B\cup X_i)\notin X_i\}.
$$
So we defined a directed graph with at least one edge in each $G^2$ with $G\in\mathcal{G}$. If for infinitely many $G\in\mathcal{G}$ not all elements of $G^2$ have the same outdegree, we are done. So we either have a cycle on infinitely many $G^2$ or we have a complete graph in which every node has outdegree $2$. In the first case we can choose a point in each $A\cup B$, where $A,B\in G^2$ are neighbours. So we can choose $5$ elements in each $G\in\mathcal{G}$. In the second case we can choose $5$ edges as follows: For each node $A\in G^2$, let $B,C\in G^2$ be the two successors of $A$ in the graph. Choose the edge which connects $B$ and $C$ (see Figure \ref{figureone}). We can again choose at most $5$ elements from $G$.\newline

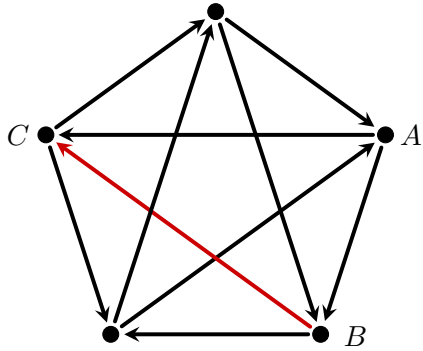
\begin{figure}[h]
\definecolor{ccqqqq}{rgb}{0.8,0.,0.}
\begin{center}
\begin{tikzpicture}[line cap=round,line join=round,>=stealth,x=0.8cm,y=0.8cm]
\clip(-0.3,-0.6) rectangle (6.8,5.9);
\draw [->,line width=1.5pt] (0.5919476561488441,3.441295204757757) -- (3.0683207790886495,5.240485593782973);
\draw [->,line width=1.5pt] (3.424912450030659,5.234361379399854) -- (5.880097178517175,3.450565260045104);
\draw [->,line width=1.5pt] (5.995642740353123,3.1066407512097185) -- (5.058141959772239,0.22131003272698058);
\draw [->,line width=1.5pt] (4.7794170519382915,0.) -- (1.7218819341055425,0.);
\draw [->,line width=1.5pt] (0.4881724535946267,3.1096838945740877) -- (1.4352500512878887,0.19487876372604607);
\draw [->,line width=1.5pt] (1.6811746009135269,0.1326700083731238) -- (5.886185992737765,3.1877896152804457);
\draw [->,line width=1.5pt,color=ccqqqq] (4.812746623335892,0.12604593626891925) -- (0.5910585515265581,3.1932818604113504);
\draw [->,line width=1.5pt] (5.855909679753793,3.3169655464245844) -- (0.6292192330563697,3.3169655464245853);
\draw [->,line width=1.5pt] (3.3114144083452732,5.154564596230232) -- (4.9199901131597805,0.2038776312225412);
\draw [->,line width=1.5pt] (1.5683244577723467,0.21468214633053861) -- (3.175050766020351,5.15967725397179);
\draw (6.14,3.65) node[anchor=north west] {$A$};
\draw (5.2,0.3) node[anchor=north west] {$B$};
\draw (-0.4,3.65) node[anchor=north west] {$C$};
\begin{scriptsize}
\draw [fill=black] (1.49857,0.) circle (3pt);
\draw [fill=black] (4.986233971165293,0.) circle (3pt);
\draw [fill=black] (6.063981408924586,3.3169655464245844) circle (3pt);
\draw [fill=black] (3.242401985582647,5.366962993627345) circle (3pt);
\draw [fill=black] (0.420822562240708,3.3169655464245853) circle (3pt);
\end{scriptsize}
\end{tikzpicture}
\caption{How to choose the edges}
\label{figureone}
\end{center}
\end{figure}

\noindent\textit{Case 2: }For all $n\in\omega$ there are only finitely many $F\in\mathcal{F}$ such that there are $A,B\in F^2$ with $\operatorname{deg}(A\cup B)=n$.\newline
Let $A_{-1}:=\emptyset$ and for every $n\in\omega$ define 
$$
A_n:=\{A\in\mathcal{F}\mid \exists B\in\mathcal{F}(\operatorname{deg}(A\cup B)=n)\}\setminus A_{n-1}.
$$
Note that these sets are pairwise disjoint families of $2$-element sets. So we can apply Lemma \ref{lem:2} and we are done.
\end{proof}

\begin{prop}
$\operatorname{RC}_6$ implies $\operatorname{C}_9^-$.
\end{prop}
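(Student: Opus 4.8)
The plan is to follow the pattern of Sections~2 and~3: apply $\operatorname{RC}_6$, cut the family down to sets of a single ``trace size'' inside the chosen set, and then dispose of each size with one of the lemmas above. So let $\mathcal{F}$ be an infinite family of $9$-element sets, which as usual we may assume pairwise disjoint. Applying $\operatorname{RC}_6$ to $\bigcup\mathcal{F}$ yields an infinite $Y\subseteq\bigcup\mathcal{F}$ with a choice function $f$ on $[Y]^6$. Since $Y$ is infinite and each member of $\mathcal{F}$ is finite, the partition of $\mathcal{F}$ according to $i:=\lvert u\cap Y\rvert$ has an infinite block $\mathcal{G}_i$ for some $i\in\{1,\dots,9\}$. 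Replacing each $u\in\mathcal{G}_i$ by $u\cap Y$, it suffices to find, for an infinite family of pairwise disjoint $i$-element subsets of $Y$ on whose $6$-subsets $f$ is defined, an infinite subfamily carrying a choice function. Everything then reduces to a case analysis on $i$.

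Four sizes are immediate. For $i=1$ there is nothing to do; for $i=6$ the only $6$-subset of each block is the block itself, so $f$ already is a choice function; for $i=3$ we invoke Proposition~\ref{case33} (i.e. $\operatorname{C}_3^-$, which holds since $\operatorname{RC}_6$ does) on the family of traces; and for $i=4$ we quote Corollary~\ref{cor:3.4} verbatim. For $i=2$ I would imitate the proof of Lemma~\ref{lemma225}: using the definable degree $\operatorname{deg}(A\cup B):=\lvert\{C\mid f(A\cup B\cup C)\in C\}\rvert$ (note $A\cup B\cup C$ is a $6$-set) either some $n$ is realized by infinitely many pairs, which a direct graph argument handles, or the level sets $A_n$ partition the family into finite blocks and Lemma~\ref{lem:2} applies.

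For the large sizes $i\in\{8,9\}$ I would reduce the size by a canonical Kinna--Wagner step. For each block $F$ tally, over its $\binom{i}{6}$ many $6$-subsets, how often each point is chosen by $f$, and let $A(F)\subseteq F$ be the set of most-chosen points. By Kummer's Lemma (Lemma~\ref{binom}) $8\nmid\binom{8}{6}=28$ and $9\nmid\binom{9}{6}=84$, so the votes cannot all be equal and $\emptyset\neq A(F)\subsetneq F$. Passing to an infinite subfamily on which $\lvert A(F)\rvert$ is constant and replacing $F$ by $A(F)$, we land on a strictly smaller size and iterate until we reach one of the cases $\{1,\dots,7\}$. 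The size $i=7$ I expect to treat in the spirit of Proposition~\ref{case33}: although a lone $7$-set may have a perfectly balanced vote, a tournament on the $7$-sets is still available, because for any two of them $F,F'$ the union $F\cup F'$ has $\binom{14}{6}=3003$ (an odd number) of $6$-subsets, so the two side-totals of $f$-votes cannot tie; orienting each edge toward the larger total gives a complete, tie-free directed graph to which the construction of \cite{Montenegro99} and a counting argument apply.

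The genuine obstacle is $i=5$. Here a single block carries no $6$-subset at all, and the pairing trick that rescues $i=7$ collapses: for two $5$-sets, $F\cup F'$ is a $10$-set with $\binom{10}{6}=210$, an even number, so the side-totals of votes can tie, and in fact every symmetric count over the $6$-subsets of a $10$-set that I have tried comes out even. Breaking this symmetry is the crux. I would attack it through the one odd ingredient available, namely the $\binom{5}{5}\binom{5}{1}=5$ subsets of the form $F\cup\{y'\}$ with $y'\in F'$, either to manufacture an asymmetry that can be fed into a tournament, or, by combining pairs of $5$-sets into $10$-sets and extracting a canonical partition into five $2$-element sets, to reach a situation where Lemma~\ref{lemma225} applies and delivers a Kinna--Wagner reduction to a smaller size. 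Making one of these routes canonical and choice-free, so that the reduction terminates in the already-settled sizes $\{1,2,3,4,6\}$, is the step I expect to be hardest.
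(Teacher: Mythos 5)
Your reduction ``replace each $u\in\mathcal{G}_i$ by $u\cap Y$'' discards the information the paper's proof actually relies on: the complement $u\setminus Y$, a set of size $9-i$, from which one may equally well choose (any element of $u\setminus Y$ is still an element of $u$). Keeping the complements makes $i=8$ trivial (the complement is a singleton) and $i=6$ an instance of $\operatorname{C}_3^-$ applied to the $3$-element complements, but more importantly it is what rescues the two cases your proposal does not close. For $i=7$ the paper applies $\operatorname{RC}_6$ a \emph{second} time, to the set of the $7\cdot 2=14$ ``edges'' $\{x,y\}$ with $x\in G$ and $y\in\overline{G}$, and uses $14\nmid\binom{14}{6}$ to select a proper nonempty subset of the $14$ edges of each $G$, hence a proper nonempty subset of $G\cup\overline{G}$, reducing to smaller trace sizes. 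Your tournament on pairs of $7$-sets does not substitute for this: the orientation of the edge between $F$ and $F'$ is an aggregate of $3003$ individual votes, not a single value $f(F\cup F')\in F$, so after Montenegro's construction the in-neighbours of a fixed $p_0$ in $A_n$ need not distinguish the elements of $p_0$ --- since $7\mid 3003$, the votes of one contest can distribute uniformly over the seven points of $p_0$, and nothing controls the sum over all in-neighbours modulo $7$. The counting step that closes the argument for $3$-element sets in Proposition \ref{case33} therefore does not carry over to $7$-element sets.

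For $i=5$, which you explicitly leave open, the paper again goes through the complement: apply $\operatorname{RC}_6$ to the family of $4$-element complements; either this drops into an earlier case, or each complement splits canonically into two $2$-element pieces, and then the $5\cdot 2=10$ edges between the $5$-element trace and such a piece form a $10$-element set carrying a canonical partition into five $2$-element sets --- exactly the hypothesis of Lemma \ref{lemma225}, which yields a Kinna--Wagner selection and hence a reduction to smaller sizes. Your alternative of combining pairs of $5$-sets into $10$-sets cannot produce the required canonical partition into five $2$-element pieces without already having a choice mechanism. So the proposal is sound for $i\in\{1,3,4,6\}$ and essentially sound (if more laborious than necessary) for $i\in\{2,8,9\}$, but the missing idea --- iterating $\operatorname{RC}_6$ on the complements and on the bipartite edge sets between trace and complement --- is precisely what the cases $i=5$ and $i=7$ require, and without it the induction on trace size does not terminate.
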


\begin{proof}
Let $\mathcal{F}$ be an infinite family of pairwise disjoint sets of size $9$. Since $\operatorname{RC}_9$ holds, there is an infinite set $Y\subseteq\bigcup\mathcal{F}$ with a choice function 
$$
f:[Y]^6\to Y.
$$
For all $0\leq i\leq 9$ let 
$$
\mathcal{G}_i:=\{F\cap Y\mid F\in\mathcal{F}\land\lvert F\cap Y\rvert=i\}.
$$
There is a $1\leq i\leq 9\setminus \{0\}$ such that $\mathcal{G}_i$ is an infinite set.\newline

\noindent\textit{Case 1: }$\mathcal{G}_1$ or $\mathcal{G}_8$ is infinite.\newline
In the case $\mathcal{G}_8$ is infinite, we look at the complements.\newline

\noindent\textit{Case 2: }$\mathcal{G}_3$ or $\mathcal{G}_6$ is infinite.\newline
Use Proposition \ref{case33}.\newline

\noindent\textit{Case 3: }$\mathcal{G}_4$  is infinite.\newline
Use Corollary \ref{cor:3.4}.\newline

\noindent\textit{Case 4: }$\mathcal{G}_5$ is infinite.\newline
Apply $\operatorname{RC}_6$ to the complements. Then we are either in one of the preceding cases  or the complements are partitioned into two sets of size two. We look at the $10$ edges between the first $5$ elements and the second two elements and use Lemma \ref{lemma225}.\newline

\noindent\textit{Case 5: }$\mathcal{G}_7$ is infinite.\newline
For all $G\in\mathcal{G}_7$ let $\overline{G}$ be the complement of $G$ in the sense that for the $F\in\mathcal{F}$ with $G\subseteq F$ we have that 
$$
\overline{G}:=F\setminus G.
$$
Note that $\lvert \overline{G}\rvert=2$. Let 
$$
\mathcal{E}:=\{ \{x,y\}\mid \exists G\in\mathcal{G}_7 (x\in G\text{ and }y\in \overline{G})\}.
$$
Apply $\operatorname{RC}_6$ to $\mathcal{E}$. Without loss of generality we can assume that we find a choice function 
$$
g:[\mathcal{E}]^6\to\mathcal{E},
$$
because otherwise we are in one of the preceding cases. So for every $G\in\mathcal{G}_7$ there are $14$ edges between $G$ and $\overline{G}$. So there are 
$$
\begin{pmatrix}14\\6\end{pmatrix}=3\cdot 7\cdot 11\cdot 13
$$
$6$-element subsets. From each of them $g$ chooses one element. Since $\begin{pmatrix}14\\6\end{pmatrix}$ is not divisible by $14$, we can choose less than $14$ edges and we are in one of the preceding cases.\newline

\noindent\textit{Case 6: }$\mathcal{G}_9$ is infinite.\newline
With the choice function $f$ we can choose an element from each $6$-element subset of a $G\in\mathcal{G}_9$. There are $\begin{pmatrix}9\\6\end{pmatrix}$ subsets of size $6$. Since $9\nmid \begin{pmatrix}9\\6\end{pmatrix}$ we can reduce this case to one of the cases above.\newline

\noindent\textit{Case 7: }$\mathcal{G}_2$ is infinite.\newline
We iteratively apply $\operatorname{RC}_6$ to the complements. So we can reduce this case to one of the cases above.
\end{proof}

\section{Why $\operatorname{RC}_5$ implies $\operatorname{LOC}_5^-$}

In this section we will prove that $\operatorname{RC}_5$ implies $\operatorname{LOC}_5^-$. The beginning of the proof will be as usual: Let $\mathcal{F}$ be an infinite, linearly orderable family of $5$-element sets. We apply $\operatorname{RC}_5$ to $\bigcup\mathcal{F}$. This will give us an infinite subfamily $\mathcal{G}\subseteq\mathcal{F}$ such that each $p\in \mathcal{G}$ is partitioned into two parts, one of size $2$ and one of size $3$. So if we could show that $\operatorname{RC}_5$ implies $\operatorname{LOC}_2^-$ or $\operatorname{LOC}_3^-$, the proof would be finished. However, Halbeisen's and Tachtsis' result ($\beta$) shows that this idea will not lead to success. That's why we will work with the set of all edges between the two parts.

\begin{thm}
\label{Thm3.1}
We have that $\operatorname{RC}_5$ implies $\operatorname{LOC}_5^-$.
\end{thm}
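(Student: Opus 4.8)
The plan is to follow the scheme already used for the other implications in this paper. First I would apply $\operatorname{RC}_5$ to $\bigcup\mathcal{F}$ to obtain an infinite $Y\subseteq\bigcup\mathcal{F}$ carrying a choice function $f$ on $[Y]^5$, and then refine $\mathcal{F}$ to an infinite subfamily $\mathcal{G}$ on which $i:=\lvert p\cap Y\rvert$ is constant. The values $i\in\{1,4,5\}$ are immediate: for $i=1$ pick the unique point of $p\cap Y$, for $i=4$ the unique point of $p\setminus Y$, and for $i=5$ apply $f$ to $p$ itself. The case $i=0$ is disposed of by reapplying $\operatorname{RC}_5$ to the union of the leftover family, since an infinite subset of that union must meet infinitely many of the (finite) blocks. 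This leaves the two complementary cases $i\in\{2,3\}$, which after passing to complements may be assumed to be $i=2$; thus every $p\in\mathcal{G}$ comes with a canonical partition into a $2$-part $P_p:=p\cap Y$ and a $3$-part $Q_p:=p\setminus Y$, and the linear order on $\mathcal{F}$ restricts to one on $\mathcal{G}$.

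As the introduction to this section notes, result $(\beta)$ rules out finishing by selecting inside the $2$-parts or inside the $3$-parts, so I would pass to the bipartite structure between them. For each $p$ let $E_p:=\{\{a,b\}\mid a\in P_p,\ b\in Q_p\}$ be the $6$ edges of the complete bipartite graph on $P_p\cup Q_p$, and set $\mathcal{E}:=\bigcup_{p\in\mathcal{G}}E_p$, an infinite family of $2$-element sets which inherits the block-level linear order from $\mathcal{G}$. Crucially, each edge has exactly one endpoint in $Y$ (its $P$-endpoint) and one outside $Y$ (its $Q$-endpoint), so any canonical choice of an edge, or of an unbalanced set of edges, immediately yields a canonical choice of a vertex. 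Applying $\operatorname{RC}_5$ to $\mathcal{E}$ produces an infinite $Z\subseteq\mathcal{E}$ with a choice function $g$ on $[Z]^5$; after a further refinement I may assume $t:=\lvert E_p\cap Z\rvert$ is constant, and the favourable regime is the one in which enough edges of each block survive to run a within-block count.

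The combinatorial heart of the proof is a counting argument on the $K_{2,3}$ of a single block. Using $g$ to select one edge from each of the relevant $5$-element sets of edges, I would record, for each vertex $v\in p$, the number $d(v)$ of selected edges incident to $v$. Since every selected edge contributes one $P$-endpoint and one $Q$-endpoint, the two $P$-degrees sum to the total number of selections and likewise for the three $Q$-degrees; whenever the two $P$-degrees differ, or the three $Q$-degrees are not all equal, the extremal vertex is canonically determined and we obtain a choice function on an infinite subfamily. The main obstacle is exactly the residual symmetric configuration, in which the $S_2\times S_3$ symmetry of $K_{2,3}$ forces the degrees to be perfectly balanced (the naive count is powerless here because $\binom{6}{5}=6$ is divisible by the number of edges). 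To break this I would use finer invariants of the selection pattern together with the linear order on $\mathcal{G}$, and where the count only delivers a proper nonempty subset $S_p\subsetneq p$ rather than a single point, feed the resulting Kinna--Wagner selection into a Montenegro-style directed-graph construction on the blocks (as in \cite{Montenegro99} and Corollary \ref{cor:3.4}) to extract an infinite subfamily with a genuine choice function. Verifying that the symmetric case can always be resolved in this way, and that the partial selections upgrade to a full choice function, is where I expect the real difficulty to lie.
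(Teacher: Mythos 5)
Your setup coincides with the paper's: apply $\operatorname{RC}_5$ to $\bigcup\mathcal{F}$, reduce to blocks split into a $2$-part and a $3$-part, form the six edges of each $K_{2,3}$, apply $\operatorname{RC}_5$ again to the edge set, and observe (the paper's Claim~1) that any canonical selection of $k$ edges with $1\le k\le 5$ from a block yields a canonical vertex, since either $2\nmid k$ (the two degrees on the $2$-part differ) or $3\nmid k$ (the three degrees on the $3$-part are not all equal). You also correctly isolate the residual case, namely that all six edges of each block survive in the second application of $\operatorname{RC}_5$. But at exactly that point your proposal stops: ``finer invariants of the selection pattern together with the linear order'' is not an argument, and you yourself flag this as where the difficulty lies. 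That residual case is the entire content of the theorem, so the proposal has a genuine gap rather than an alternative route.

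What the paper actually does there is the piece you are missing. Each block's edge set $p^*$ carries two canonical partitions: into two $3$-sets $\gamma_0^p,\gamma_1^p$ (edges grouped by their endpoint in the $2$-part) and into three $2$-sets $\beta_0^p,\beta_1^p,\beta_2^p$ (edges grouped by their endpoint in the $3$-part). Since $3+2=5$, the choice function $g$ on $5$-element sets of edges applies to every union $\gamma\cup\beta$ with $\gamma,\beta$ taken from \emph{different} blocks; this cross-block pairing is the ``finer invariant.'' One then defines $\deg(\gamma)=\lvert\{\beta : g(\gamma\cup\beta)\in\beta\}\rvert$ and splits into cases: if some degree value occurs infinitely often, a fixed finite family of $\beta$'s selects one edge from infinitely many blocks (done by Claim~1); otherwise the blocks stratify into finite levels by degree, and after two pruning steps one arranges that $g(\gamma_i^p\cup\beta_j^q)\in\beta_j^q$ uniformly for $p$ in a higher level than $q$, independently of $i$. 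Only now is the linear order used, and only to pick the least block $p_n$ of each finite level $K_n$; applying $g$ to $\gamma_0^{p_{n+1}}\cup\beta_j^{p_n}$ for $j\le 2$ selects exactly three edges of $p_n^*$, one from each $\beta_j^{p_n}$, and Claim~1 finishes. Note also that your fallback of upgrading a Kinna--Wagner selection via a Montenegro-style directed graph is not needed and not what the paper does: Claim~1 already converts any unbalanced edge selection directly into a vertex choice, so the whole problem is to \emph{produce} such a selection in the symmetric case, which is done by the cross-block construction just described.
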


\begin{proof}
Let $\mathcal{F}$ be an infinite, linearly orderable collection of pairwise disjoint sets of size 5. We fix a linear order on $\mathcal{F}$ and apply $\operatorname{RC}_5$ on the set $X\coloneqq\bigcup \mathcal{F}$ to find an infinite subset $Y\subseteq X$ with a choice function $f:[Y]^5\to Y$. For every $i\leq 5$ we define
$$
\mathcal{F}_i\coloneqq\{ p\in\mathcal{F}\mid \lvert p\cap Y\rvert=i\}.
$$
Without loss of generality we can assume that by the $\mathcal{F}_i$, the elements $p$ of an infinite subfamily $\mathcal{G}\subseteq\mathcal{F}$ are partitioned into a set with two elements and a set with three elements, namely $p=\{ a_p,b_p,c_p\}\cup \{ x_p,y_p\}$. \newline

\noindent Now we look at the set $Z$ of all non-directed edges between a point in $\{a_p,b_p,c_p\}$ and one in $\{x_p,y_p\}$. For every $p\in\mathcal{G}$ let $p^*$ be the set of all edges in $Z$ belonging to $p$ and for each subset $\mathcal{H}\subseteq\mathcal{F}$ we define $\mathcal{H}^*\coloneqq\{ p^*\mid p\in\mathcal{H}\}$. \newline

\textit{Claim 1:} Assume that there is an infinite subset $\mathcal{H}\subseteq \mathcal{G}$ such that we can choose between $1$ and $5$ elements from each $p^*\in \mathcal{H}^*$. Then there is a choice function
$$
h:\mathcal{H}\to\bigcup\mathcal{H}.
$$
\begin{proof}[Proof of Claim 1] Let $p\in\mathcal{H}$ and assume that we can choose $k\in\{ 1,2,3,4,5\}$ elements from $p^*$. We look at $p$ as a graph with $k$ edges. If $2\nmid k$, $x_p$ and $y_p$ do not have the same degree and we can choose the element with lower degree. Otherwise we have that $3\nmid k$ and we can choose an element from $\{ a_p,b_p,c_p\}$.
\renewcommand{\qedsymbol}{{\large$\boldsymbol{\dashv}_{\text{Claim 1}}$}}
\end{proof}

Now we apply $\operatorname{RC}_5$ on the set $Z$. Then there is an infinite subset $Q\subseteq Z$ with a choice function $g:[Q]^5\to Q$. By Claim 1 we can without loss of generality assume that $p^*\subseteq Q$ for every $p\in \mathcal{H}$.\newline

\noindent We can partition each $p^*\in\mathcal{H}^*$ as follows into two sets $\gamma_0^p$ and $\gamma_1^p$ of size three: 
$$
\gamma_0^p\coloneqq\{\{ a_p, x_p\},\{b_p,x_p\},\{ c_p,x_p\}\}~\text{ and }~\gamma_1^p\coloneqq\{\{ a_p, y_p\},\{b_p,y_p\},\{ c_p,y_p\}\}.
$$
Analogously we can partition $p^*$ into three sets $\beta_0, \beta_1, \beta_2$ of size two as follows:
$$
\beta_0^p\coloneqq\{\{a_p,x_p\},\{a_p,y_p\}\}, ~\beta_1^p\coloneqq\{\{b_p,x_p\},\{b_p,y_p\}\}~\text{ and }~\beta_2^p\coloneqq\{\{c_p,x_p\},\{c_p,y_p\}\}.
$$

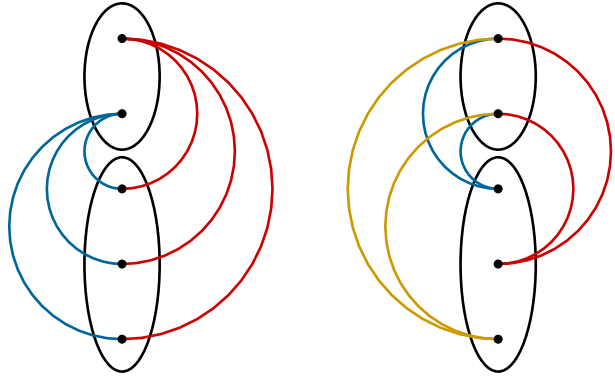
\begin{figure}[h]
\definecolor{cczzqq}{rgb}{0.8,0.6,0.}
\definecolor{qqwwzz}{rgb}{0.,0.4,0.6}
\definecolor{ccqqqq}{rgb}{0.8,0.,0.}
\begin{center}
\begin{tikzpicture}[line cap=round,line join=round,>=triangle 45,x=1.0cm,y=1.0cm]
\clip(0.3,0.3) rectangle (8.7,5.6);
\draw [rotate around={90.:(2.,4.495)},line width=1pt] (2.,4.495) ellipse (0.9732582382642967cm and 0.5000065983057722cm);
\draw [rotate around={90.:(2.,1.995)},line width=1pt] (2.,1.995) ellipse (1.4255623716554962cm and 0.500003075470584cm);
\draw [rotate around={90.:(7.,4.495)},line width=1pt] (7.,4.495) ellipse (0.9732582382642712cm and 0.5000065983057591cm);
\draw [rotate around={90.:(7.,1.995)},line width=1pt] (7.,1.995) ellipse (1.4255623716554962cm and 0.500003075470584cm);
\draw [shift={(2.,4.)},color=ccqqqq,line width=1pt]  plot[domain=-1.5707963267948966:1.5707963267948966,variable=\t]({1.*1.*cos(\t r)+0.*1.*sin(\t r)},{0.*1.*cos(\t r)+1.*1.*sin(\t r)});
\draw [shift={(2.,3.5)},color=ccqqqq,line width=1pt]  plot[domain=-1.5707963267948966:1.5707963267948966,variable=\t]({1.*1.5*cos(\t r)+0.*1.5*sin(\t r)},{0.*1.5*cos(\t r)+1.*1.5*sin(\t r)});
\draw [shift={(2.,3.)},color=ccqqqq,line width=1pt]  plot[domain=-1.5707963267948966:1.5707963267948966,variable=\t]({1.*2.*cos(\t r)+0.*2.*sin(\t r)},{0.*2.*cos(\t r)+1.*2.*sin(\t r)});
\draw [shift={(2.,3.5)},color=qqwwzz,line width=1pt]  plot[domain=1.5707963267948966:4.71238898038469,variable=\t]({1.*0.5*cos(\t r)+0.*0.5*sin(\t r)},{0.*0.5*cos(\t r)+1.*0.5*sin(\t r)});
\draw [shift={(2.,3.)},color=qqwwzz,line width=1pt]  plot[domain=1.5707963267948966:4.71238898038469,variable=\t]({1.*1.*cos(\t r)+0.*1.*sin(\t r)},{0.*1.*cos(\t r)+1.*1.*sin(\t r)});
\draw [shift={(2.,2.5)},color=qqwwzz,line width=1pt]  plot[domain=1.5707963267948966:4.71238898038469,variable=\t]({1.*1.5*cos(\t r)+0.*1.5*sin(\t r)},{0.*1.5*cos(\t r)+1.*1.5*sin(\t r)});
\draw [shift={(7.,4.)},color=qqwwzz,line width=1pt]  plot[domain=1.5707963267948966:4.71238898038469,variable=\t]({1.*1.*cos(\t r)+0.*1.*sin(\t r)},{0.*1.*cos(\t r)+1.*1.*sin(\t r)});
\draw [shift={(7.,3.5)},color=qqwwzz,line width=1pt]  plot[domain=1.5707963267948966:4.71238898038469,variable=\t]({1.*0.5*cos(\t r)+0.*0.5*sin(\t r)},{0.*0.5*cos(\t r)+1.*0.5*sin(\t r)});
\draw [shift={(7.,2.5)},color=cczzqq,line width=1pt]  plot[domain=1.5707963267948966:4.71238898038469,variable=\t]({1.*1.5*cos(\t r)+0.*1.5*sin(\t r)},{0.*1.5*cos(\t r)+1.*1.5*sin(\t r)});
\draw [shift={(7.,3.)},color=cczzqq,line width=1pt]  plot[domain=1.5707963267948966:4.71238898038469,variable=\t]({1.*2.*cos(\t r)+0.*2.*sin(\t r)},{0.*2.*cos(\t r)+1.*2.*sin(\t r)});
\draw [shift={(7.,3.)},color=ccqqqq,line width=1pt]  plot[domain=-1.5707963267948966:1.5707963267948966,variable=\t]({1.*1.*cos(\t r)+0.*1.*sin(\t r)},{0.*1.*cos(\t r)+1.*1.*sin(\t r)});
\draw [shift={(7.,3.5)},color=ccqqqq,line width=1pt]  plot[domain=-1.5707963267948966:1.5707963267948966,variable=\t]({1.*1.5*cos(\t r)+0.*1.5*sin(\t r)},{0.*1.5*cos(\t r)+1.*1.5*sin(\t r)});
\begin{scriptsize}
\draw [fill=black] (2.,5.) circle (1.5pt);
\draw [fill=black] (2.,4.) circle (1.5pt);
\draw [fill=black] (2.,3.) circle (1.5pt);
\draw [fill=black] (2.,2.) circle (1.5pt);
\draw [fill=black] (2.,1.) circle (1.5pt);
\draw [fill=black] (7.,5.) circle (1.5pt);
\draw [fill=black] (7.,4.) circle (1.5pt);
\draw [fill=black] (7.,3.) circle (1.5pt);
\draw [fill=black] (7.,2.) circle (1.5pt);
\draw [fill=black] (7.,1.) circle (1.5pt);
\end{scriptsize}
\end{tikzpicture}
\end{center}
\caption{The  partitions of a $p^*$ into {\color{ccqqqq}$\gamma_0^p$}, {\color{qqwwzz}$\gamma_1^p$} on the left and into {\color{ccqqqq}$\beta_0^p$}, {\color{qqwwzz}$\beta_1^p$} and {\color{cczzqq}$\beta_2^p$} on the right.}
\end{figure}

Let 
$$
\mathcal{H}_3^*\coloneqq\{\gamma_i^p\mid i\leq 1\land p\in\mathcal{H}\}
$$
be the sets of size three appearing in the partition of a $p^*\in\mathcal{H}^*$ and let 
$$
\mathcal{H}_2^*\coloneqq\{\beta_i^p\mid i\leq 2\land p\in\mathcal{H}\}
$$
be the family of sets of size two which appear in the partition of a $p^*\in\mathcal{H}^*$. If there is a $\gamma \in \mathcal{H}_3^*$ such that for infinitely many $\beta\in \mathcal{H}_2^*$
\begin{equation}
\label{eq:a}
g(\gamma\cup\beta)\in \beta,
\end{equation}
we are done by Claim 1. Otherwise, for every $\gamma \in \mathcal{H}_3^*$ there are only finitely many $\beta\in \mathcal{H}_2^*$ with (\ref{eq:a}) and we define
$$
\operatorname{deg}(\gamma)\coloneqq\lvert\{\beta\in\mathcal{H}_2^*\mid g(\gamma\cup\beta)\in \beta\}\rvert\in \omega.
$$
We are in one of the following two cases:\newline

\noindent \textit{Case 1:} There is an $n\in\omega$ such that $\operatorname{deg}(\gamma)=n$ for infinitely many $\gamma\in \mathcal{H}_3^*$.\newline
\noindent Let $\mathcal{I}_3^*\coloneqq\{\gamma\in\mathcal{H}_3^*\mid \operatorname{deg}(\gamma)=n\}$. Choose an $(n+4)$-element set $\{\beta_i\mid i\leq n+3\}\subseteq\mathcal{H}_2^*$. For every $\gamma\in \mathcal{I}_3^*$ we define
$$
j(\gamma)\coloneqq\min\{i\leq n+3\mid g(\gamma\cup \beta_i)\in\gamma\}.
$$
So from every $\gamma\in\mathcal{I}_3^*$ we choose the element
$$
g(\gamma\cup \beta_{j(\gamma)})\in\gamma
$$
and we are done by Claim 1.\newline

\noindent \textit{Case 2:} For each $n\in\omega$ there are only finitely many $\gamma\in\mathcal{H}_3$ with $\operatorname{deg}(\gamma)=n$.\newline
\noindent For every $n\in\omega$ we define
$$
A_n\coloneqq\{\gamma\in\mathcal{H}_3^*\mid\operatorname{deg}(\gamma)=n\}\text{ and } B_n\coloneqq\{\beta\in \mathcal{H}_2^*\mid\exists \gamma\in A_n\exists p^*\in\mathcal{H}^*(\gamma\subseteq p^*\land \beta\subseteq p^*)\}.
$$
If there are infinitely many $p\in \mathcal{H}$ such that $\gamma^p_0\in A_n$ and $\gamma_1^p\in A_m$ with $n\not =m$ we are done by Claim 1 since we can choose three edges from each of these infinitely many $p$'s. So we can assume that for every $p\in\mathcal{H}$ both, $\gamma_0^p$ and $\gamma_1^p$, have the same degree and we define
$$
C_n\coloneqq\{ p\in \mathcal{H}\mid \{\gamma_0^p,\gamma_1^p\}\subseteq A_n\}
$$
for every $n\in\omega$. Moreover, let
$$
\operatorname{out}(\beta)\coloneqq\left\{\gamma\in\bigcup_{m>n} A_m\mid g(\beta\cup\gamma)\in\gamma\right\}.
$$
for every $n\in\omega$ and every $\beta\in B_n$. If there is a $\beta\in\bigcup_{n\in\omega} B_n$ with $\lvert \operatorname{out}(\beta)\rvert=\infty$ we are done by Claim 1. So assume that $\lvert \operatorname{out}(\beta)\rvert\in\omega$ for all $\beta\in\bigcup_{n\in\omega}B_n$.\newline

\noindent \textit{Claim 2:} We can construct an infinite subset $\mathcal{J}\subseteq\mathcal{H}$ with a partition $\mathcal{J}=\bigcup_{n\in\omega} J_n$, where each $J_n$ is finite. Moreover, for all natural numbers $n>m$, all $p\in J_n$, all $q\in J_m$, all $i\in\{ 0,1\}$ and all $j\in\{ 0,1,2\}$
$$
g(\gamma_i^p\cup \beta_j^q)\in \beta_j^q.
$$
\begin{proof}[Proof of Claim 2] 
For each $n\in\omega$ let $R_n$ be the finite set of all $p\in\bigcup_{m>n}C_m$ such that there are a $q\in C_n$, an $i\in \{0,1\}$ and a $j\in \{0,1,2\}$ with
$$
g(\gamma_i^p\cup \beta_j^q)\in\gamma_i^p.
$$
Let $J_n:=C_n\setminus R_n$.
\renewcommand{\qedsymbol}{{\large$\boldsymbol{\dashv}_{\text{Claim 2}}$}}
\end{proof}

With the same construction we did in the proof of Claim 2 we can find an infinite subset $\mathcal{K}\subseteq\mathcal{J}$ with a partition $\mathcal{K}=\bigcup_{n\in\omega} K_n,$ where each $K_n$ is finite and non-empty. Moreover, we can assume that for all natural numbers $n>m$, all $p\in I_n$, all $q\in I_m$ and all $j\leq 2$
$$
g(\gamma_0^p\cup\beta_j^q)=g(\gamma_1^p\cup\beta_j^q)\in \beta_j^q.
$$
Note: Up to now we nowhere used the assumption that our infinite family $\mathcal{F}$ of sets of size five is linearly ordered. In the last step we will need this assumption.\newline

\noindent For each $n\in\omega$, let $p_n\in K_n$ be the smallest element in $K_n$ with respect to the linear order on $\mathcal{F}$. Note that such a smallest element exists since each $K_n$ is finite and non-empty. We define
\begin{align*}
h^*:\{ p_n^*\mid n\in\omega\}&\to\left [\bigcup_{n\in\omega} p_n^*\right ]^3\\
p_n^*&\mapsto \left\{ g\left(\gamma_0^{p_{n+1}}\cup \beta_j^{p_n}\right)\mid j\leq 2\right\}.
\end{align*}
By Claim 1 we are done.
\end{proof}


\section{Why $\operatorname{RC}_7$ implies $\operatorname{LOC}_7^-$}

\begin{lem}
\label{lemma1}
Let $\mathcal{F}$ be a linearly orderable family of pairwise disjoint $6$-element sets. We assume that we can partition each $p\in\mathcal{F}$ in a unique way into three $2$-element sets $\beta_0^p,\beta_1^p$ and $\beta_2^p$ and in a unique way into two $3$-element sets $\gamma_0^p,\gamma_1^p$. Moreover, we require that there is a choice function
$$
f:\left [\bigcup \mathcal{F}\right]^7\to\bigcup \mathcal{F}.
$$
Then there is an infinite subfamily $\mathcal{G}\subseteq\mathcal{F}$ with a Kinna-Wagner selection function.
\end{lem}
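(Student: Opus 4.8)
The plan is to rerun the proof of Theorem~\ref{Thm3.1} with the $5$-element queries $\gamma\cup\beta$ replaced by $7$-element queries $\gamma\cup\beta\cup\beta'$: indeed $7=3+2+2$ is the only way to assemble a $7$-element set from the pieces $\gamma_i^p$ (size $3$) and $\beta_j^p$ (size $2$), so $f$ will be fed exactly one $3$-set and two $2$-sets. Write $\mathcal{F}^{(3)}:=\{\gamma_i^p\mid i\le 1,\ p\in\mathcal{F}\}$ and $\mathcal{F}^{(2)}:=\{\beta_j^p\mid j\le 2,\ p\in\mathcal{F}\}$. Since each $p$ is the $6$-element object to be split, a Kinna--Wagner selection for $\mathcal{G}$ is just a canonical choice of a proper non-empty subset of each $p\in\mathcal{G}$; selecting one of the $\gamma_i^p$, or a proper subfamily of $\{\beta_0^p,\beta_1^p,\beta_2^p\}$, already qualifies. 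In particular no analogue of Claim~1 (the passage from a proper subset to a full choice function) is needed here, although the essential difficulty of that proof -- breaking the symmetry between $\gamma_0^p$ and $\gamma_1^p$ -- will reappear unchanged.

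For $\gamma\in\mathcal{F}^{(3)}$ I would set
$$
\operatorname{deg}(\gamma):=\left\lvert\left\{\{\beta,\beta'\}\in[\mathcal{F}^{(2)}]^2\mid \gamma,\beta,\beta'\ \text{pairwise disjoint and}\ f(\gamma\cup\beta\cup\beta')\in\beta\cup\beta'\right\}\right\rvert,
$$
counting the $4$-element paddings $\beta\cup\beta'$ that absorb the value of $f$. If some $\gamma$ has infinite degree then infinitely many paddings absorb, these meet infinitely many distinct members of $\mathcal{F}$, and exactly as in the first reduction of Theorem~\ref{Thm3.1} the linear order lets us read off, inside each such member, a proper non-empty subset (the routine point being that one keeps the selection proper by taking the receiving $2$-sets from a single least block). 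So we may assume every $\operatorname{deg}(\gamma)$ is finite. If moreover $\operatorname{deg}(\gamma_0^p)\ne\operatorname{deg}(\gamma_1^p)$ for infinitely many $p$, the two halves of $p$ are distinguishable and we select the $\gamma_i^p$ of smaller degree, a proper non-empty subset; this finishes that case.

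The remaining -- and genuinely hard -- case is that for cofinitely many $p$ the $3$-sets $\gamma_0^p,\gamma_1^p$ share the same finite degree and are thus invisible to this local invariant, so that ``take something from $\gamma_0^p$ and something from $\gamma_1^p$'' threatens to return all of $p$. Here I would copy the global bookkeeping of Theorem~\ref{Thm3.1}, Case~2: form the finite level sets $A_n:=\{\gamma\mid\operatorname{deg}(\gamma)=n\}$ and $C_n:=\{p\mid\gamma_0^p,\gamma_1^p\in A_n\}$, define the out-degree of a $2$-set toward the higher levels $\bigcup_{m>n}A_m$ and dispose of the infinite-out-degree case as before, and then run the nested construction of Claim~2 (twice) to thin $\mathcal{F}$ to an infinite $\mathcal{K}=\bigcup_{n}K_n$ with every $K_n$ finite and non-empty and with all cross-level queries aligned, so that $f(\gamma_i^{p}\cup\beta\cup\beta')$ lands in $\beta\cup\beta'$ and is independent of $i$ whenever $p$ lies above the two $2$-sets $\beta,\beta'$. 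Only at this last step do I use linear orderability: put $p_n:=\min K_n$ and select from $p_n$ the set $\{f(\gamma^{p_{n+1}}\cup\beta\cup\beta')\mid\{\beta,\beta'\}\in[\{\beta_0^{p_n},\beta_1^{p_n},\beta_2^{p_n}\}]^2\}$, using a $3$-set of the next level $p_{n+1}$ as a probe (the alignment making the choice of $\gamma_0$ versus $\gamma_1$ at $p_{n+1}$ irrelevant) and two $2$-sets of $p_n$ to fill the seven slots. By the alignment each value lies in $p_n$, and as there are only three such pairs the selected set has at most three elements, hence is a proper non-empty subset of $p_n$ for every $n$.

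The main obstacle is precisely this symmetric case: once $\gamma_0^p$ and $\gamma_1^p$ have equal degree there is no local way to tell them apart, and forcing a \emph{proper} subset requires organising $\mathcal{F}$ into finite levels with controlled cross-level behaviour and then using the linear order to transport a selection from level $n+1$ down to level $n$. The one feature new to the present lemma, relative to Theorem~\ref{Thm3.1}, is that $7=3+2+2$ forces a second $2$-set into every query, so all degrees and out-degrees are now counted over $4$-element paddings rather than single $2$-sets; the routine verification to watch is that the ``least block, then union'' device still produces well-defined, choice-free selections, and keeps them proper, when two members of $\mathcal{F}$ (each carrying three unordered $2$-sets) enter a query simultaneously.
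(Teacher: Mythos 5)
Your proposal follows the same route as the paper's own (very terse) proof, which defines a degree on the $3$-element parts by counting absorbing $4$-element paddings, settles the infinite-degree case, and defers everything else to the machinery of Theorem~\ref{Thm3.1}; your unequal-degree reduction, level sets, Claim-2-style thinnings and the final transport from $\min K_{n+1}$ down to $\min K_n$ via the linear order all match that template. The one substantive deviation is your padding set: you count pairs $\{\beta,\beta'\}\in[\mathcal{F}^{(2)}]^2$, which may straddle two members of $\mathcal{F}$, whereas the paper uses $\mathcal{F}_4=\{\beta_i^p\cup\beta_j^p\mid \{i,j\}\in[3]^2,\ p\in\mathcal{F}\}$, so every padding lives inside a single member. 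This matters precisely in your infinite-degree case: with cross-member pairs the absorbed values landing inside a fixed $p$ can exhaust all six points of $p$ (so properness fails), and your remedy of taking ``the receiving $2$-sets from a single least block'' is not well-defined, because the three $2$-sets of a member carry no canonical order (if they did, $\beta_0^p$ would already be a Kinna-Wagner selection and the lemma would be trivial). With within-member paddings the absorbed values in $p$ form between one and three of its six points, so that case closes at once; since your final transport step already uses within-member pairs, adopting $\mathcal{F}_4$ throughout repairs the argument without changing anything else.
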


\begin{proof}
We define
$$
\mathcal{F}_3:=\{\gamma_i^p\mid i\in\{ 0,1\}\land p\in\mathcal{F}\}
$$
and 
$$
\mathcal{F}_4:=\{\beta_i^p\cup\beta_j^p\mid \{i,j\}\in [3]^2\land p\in\mathcal{F}\}.
$$
For every $\gamma\in\mathcal{F}_3$ let
$$
\operatorname{deg}(\gamma):=\lvert\{ \delta\in\mathcal{F}_4\mid\delta\cap \gamma=\emptyset\land f(\delta\cup\gamma)\in\delta\}\rvert.
$$
If there is a $\gamma\in\mathcal{F}_3$ with $\operatorname{deg}(\gamma)=\infty$ we are done because we can choose between one and three edges from infinitely many elements of $\mathcal{F}$. The rest of the proof is similar to the proof of Theorem \ref{Thm3.1}.
\end{proof}

\begin{lem}
\label{lemma2}
Let $\mathcal{F}$ be a linearly orderable family of pairwise disjoint $12$-element sets. We assume that we can partition each $p\in\mathcal{F}$ in a unique way into three $4$-element sets $\delta_0,\delta_1$ and $\delta_2$ and in a unique way into four $3$-element sets $\gamma_0,\gamma_1,\gamma_2$ and $\gamma_3$. Moreover, we require that there is a choice function
$$
f:\left[\bigcup\mathcal{F}\right]^7\to\bigcup \mathcal{F}.
$$
Then there is an infinite subset $\mathcal{G}\subseteq\mathcal{F}$ with a Kinna-Wagner selection function.
\end{lem}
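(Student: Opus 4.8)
The plan is to rerun the argument of Lemma \ref{lemma1}, and hence of Theorem \ref{Thm3.1}, now letting the three $4$-element pieces $\delta_0,\delta_1,\delta_2$ play the role previously played by the four-element sets and the four $3$-element pieces $\gamma_0,\gamma_1,\gamma_2,\gamma_3$ play the role of the three-element sets. Everything fits because $4+3=7$, so that $f(\delta\cup\gamma)$ is defined whenever $\delta$ and $\gamma$ are disjoint. Accordingly I set
$$
\mathcal{F}_3:=\{\gamma_i^p\mid i\leq 3\land p\in\mathcal{F}\},\qquad
\mathcal{F}_4:=\{\delta_i^p\mid i\leq 2\land p\in\mathcal{F}\},
$$
and for $\gamma\in\mathcal{F}_3$ I put
$$
\operatorname{deg}(\gamma):=\lvert\{\delta\in\mathcal{F}_4\mid \delta\cap\gamma=\emptyset\land f(\delta\cup\gamma)\in\delta\}\rvert .
$$
Since the desired conclusion is only a Kinna-Wagner selection function, at every stage it suffices to select from infinitely many $p$ a proper non-empty subset of $p$; in particular the parity bookkeeping of Claim~1 in Theorem \ref{Thm3.1} is not needed here, which streamlines the argument.

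First the easy wins. If some $\gamma=\gamma_i^{p_0}$ has $\operatorname{deg}(\gamma)=\infty$, then for infinitely many $p\neq p_0$ there is a $\delta^p\in\mathcal{F}_4$ with $f(\delta^p\cup\gamma)\in\delta^p$, and the selected point lies in $\delta^p\subseteq p$; mapping each such $p$ to the singleton of that point is a Kinna-Wagner selection. So assume $\operatorname{deg}(\gamma)<\infty$ for all $\gamma\in\mathcal{F}_3$. In Case~1, where some fixed $n$ has $\operatorname{deg}(\gamma)=n$ for infinitely many $\gamma$, I fix an $(n+4)$-element subset $\{\delta_i\mid i\leq n+3\}\subseteq\mathcal{F}_4$ and for each such $\gamma=\gamma^p$ take the least $i$ with $\delta_i\cap\gamma=\emptyset$ and $f(\gamma\cup\delta_i)\in\gamma$; such an $i$ exists because at most $n$ of the fixed $\delta_i$ satisfy $f(\gamma\cup\delta_i)\in\delta_i$ and at most three of them lie in $p$ and can meet $\gamma$. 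The selected point lies in $\gamma^p\subseteq p$, so we are done.

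The remaining Case~2, in which every degree value is attained only finitely often, is carried out as in Theorem \ref{Thm3.1}. If for infinitely many $p$ the four numbers $\operatorname{deg}(\gamma_0^p),\dots,\operatorname{deg}(\gamma_3^p)$ are not all equal, then the union of those $\gamma_i^p$ realising the minimal value is a proper non-empty subset of $p$ and gives a Kinna-Wagner selection. Otherwise I form the strata $A_n:=\{\gamma\in\mathcal{F}_3\mid\operatorname{deg}(\gamma)=n\}$, the families $B_n$ of $4$-element pieces sharing an element $p$ with some $\gamma\in A_n$, and $C_n:=\{p\in\mathcal{F}\mid \gamma_i^p\in A_n\text{ for all }i\leq 3\}$, discard the finitely many backward-pointing elements to obtain finite non-empty blocks $J_n$ and then $K_n$ on which, for $n>m$, every evaluation $f(\gamma_i^p\cup\delta_j^q)$ with $p\in K_n$ and $q\in K_m$ lands in $\delta_j^q$ and is independent of $i$. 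Finally I use the fixed linear order on $\mathcal{F}$ to pick in each block its least element $p_n$ and set
$$
h^*(p_n):=\{\,f(\gamma_0^{p_{n+1}}\cup\delta_j^{p_n})\mid j\leq 2\,\}\subseteq p_n,
$$
a $3$-element, hence proper non-empty, subset of $p_n$; this is the required Kinna-Wagner selection on $\{p_n\mid n\in\omega\}$.

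The routine parts are the two easy wins and the constant check that the identity $4+3=7$ keeps every evaluation of $f$ legal. The genuine work, and the step I expect to be the main obstacle, is Case~2: establishing the monotonicity that lets one thin $\mathcal{F}$ down to blocks $K_n$ on which all mixed evaluations point uniformly into the lower block (the analogue of Claim~2), and then verifying that the final construction really yields a proper non-empty subset of each $p_n$ rather than all of it or nothing. As in Theorem \ref{Thm3.1}, this is exactly the point where the linear orderability of $\mathcal{F}$ is indispensable.
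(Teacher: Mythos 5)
Your proposal is correct and follows exactly the route the paper intends: the paper's own proof of this lemma is just the one-line remark that it is ``similar to the proof of Theorem \ref{Thm3.1}'', and your argument is a faithful instantiation of that scheme with $\mathcal{F}_3$, $\mathcal{F}_4$, the degree function, the two easy cases, and the stratification plus linear-order selection in Case~2. You in fact supply more detail than the paper does (including the correct observation that only a Kinna--Wagner selection is needed, so the parity step of Claim~1 can be dropped); the only cosmetic point is that in the infinite-degree case you should select the set of all (at most three) chosen points of $p$ rather than ``the singleton'', since several $\delta_j^p$ may qualify.
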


\begin{proof}
The proof is similar to the proof of Theorem \ref{Thm3.1}.
\end{proof}

\begin{lem}
\label{lemma3}
Let $\mathcal{F}$ be a linearly orderable family of pairwise disjoint $10$-element sets. We assume that we can partition each $p\in\mathcal{F}$ in a unique way into two $5$-element sets $\epsilon_0$ and $\epsilon_1$ and in a unique way into five $2$-element sets $\beta_i,~i\leq 4$. Moreover, we require that there is a choice function
$$
f:\left[\bigcup\mathcal{F}\right]^7\to\bigcup \mathcal{F}.
$$
Then there is an infinite subset $\mathcal{G}\subseteq\mathcal{F}$ with a Kinna-Wagner selection function.
\end{lem}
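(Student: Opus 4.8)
The plan is to mimic the proof of Theorem \ref{Thm3.1} almost verbatim, transporting the roles of the two blocks of the partition: there each $5$-element set was split into a $3$-set $\gamma$ and a $2$-set $\beta$ with $|\gamma\cup\beta|=5$ matching the choice function on $[\,\cdot\,]^5$, whereas here each $p$ is split into a $5$-set $\epsilon_i$ and a $2$-set $\beta_j$ with $|\epsilon\cup\beta|=5+2=7$ matching the given $f$ on $[\bigcup\mathcal{F}]^7$. The essential simplification is that we now only want a Kinna--Wagner selection function, so the analogue of Claim~1 of Theorem \ref{Thm3.1} is trivial: any uniform procedure that isolates a nonempty proper subset of the ten elements of each member of an infinite subfamily already \emph{is} such a function (choosing at most one point out of each of the five $2$-sets $\beta_j^p$ gives at most five of the ten points, hence something proper).

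Concretely, I would set $\mathcal{F}_5:=\{\epsilon_i^p\mid i\le 1,\ p\in\mathcal{F}\}$, $\mathcal{F}_2:=\{\beta_j^p\mid j\le 4,\ p\in\mathcal{F}\}$, and for $\epsilon\in\mathcal{F}_5$ define $\operatorname{deg}(\epsilon):=|\{\beta\in\mathcal{F}_2\mid \beta\cap\epsilon=\emptyset\land f(\epsilon\cup\beta)\in\beta\}|$. If some $\epsilon=\epsilon_i^p$ has infinite degree, then for infinitely many $q\neq p$ there is a $j$ with $f(\epsilon\cup\beta_j^q)\in\beta_j^q$, and collecting these chosen points yields, on each such $q$, a nonempty subset of size $\le 5<10$, i.e. a Kinna--Wagner selection on an infinite subfamily. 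So assume every $\operatorname{deg}(\epsilon)$ is finite. If $\operatorname{deg}(\epsilon)=n$ for infinitely many $\epsilon$, fix an $(n+6)$-element subset of $\mathcal{F}_2$; at most five of its members meet a given $\epsilon$, so at least $n+1$ are disjoint from $\epsilon$, and since $\operatorname{deg}(\epsilon)=n$ at least one of those $\beta$ satisfies $f(\epsilon\cup\beta)\in\epsilon$. Taking the least such $\beta$ in a fixed enumeration isolates one point of $\epsilon\subseteq p$, again a proper nonempty subset; this is the analogue of Case~1 of Theorem \ref{Thm3.1}.

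The remaining case is that for every $n$ only finitely many $\epsilon$ satisfy $\operatorname{deg}(\epsilon)=n$. Here I would first note that if infinitely many $p$ have $\operatorname{deg}(\epsilon_0^p)\neq\operatorname{deg}(\epsilon_1^p)$ we may choose the half of smaller degree, a $5$-element proper subset of $p$; so assume $\operatorname{deg}(\epsilon_0^p)=\operatorname{deg}(\epsilon_1^p)$ everywhere and put $A_n:=\{\epsilon\mid\operatorname{deg}(\epsilon)=n\}$ and $C_n:=\{p\mid\operatorname{deg}(\epsilon_0^p)=\operatorname{deg}(\epsilon_1^p)=n\}$, all finite. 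Setting $\operatorname{out}(\beta):=\{\epsilon\in\bigcup_{m>n}A_m\mid f(\beta\cup\epsilon)\in\epsilon\}$ for $\beta$ belonging to a member of $C_n$, I would rule out (as above) any $\beta$ with $|\operatorname{out}(\beta)|=\infty$, and then, using the finiteness of all $\operatorname{out}(\beta)$, run the construction of Claim~2 of Theorem \ref{Thm3.1}: removing from the layers the finitely many higher $p$ that send an edge $f(\epsilon_i^p\cup\beta_j^q)\in\epsilon_i^p$ into some lower $q$ produces an infinite $\mathcal{J}=\bigcup_n J_n$ with each $J_n$ finite such that for $n>m$, $p\in J_n$, $q\in J_m$ and all $i,j$ one has $f(\epsilon_i^p\cup\beta_j^q)\in\beta_j^q$. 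A second application of the same thinning yields $\mathcal{K}=\bigcup_n K_n$ with each $K_n$ finite and nonempty along which moreover $f(\epsilon_0^p\cup\beta_j^q)=f(\epsilon_1^p\cup\beta_j^q)$ whenever $p$ lies above $q$.

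Finally I would invoke the linear order: let $p_n$ be the least element of $K_n$ and assign to $p_n$ the set $\{f(\epsilon_0^{p_{n+1}}\cup\beta_j^{p_n})\mid j\le 4\}$. By the layering this consists of exactly one point of each $\beta_j^{p_n}$, so it is a size-$5$ proper nonempty subset of $p_n$, and the agreement condition guarantees that replacing $\epsilon_0^{p_{n+1}}$ by $\epsilon_1^{p_{n+1}}$ leaves it unchanged, so the definition does not depend on the non-canonical labelling of the two halves of $p_{n+1}$. This gives the desired Kinna--Wagner selection on $\{p_n\mid n\in\omega\}$. The main obstacle is exactly this last block: securing simultaneously the one-directional ``edges between layers'' property and the agreement of the two halves, i.e. the constructions of $\mathcal{J}$ and $\mathcal{K}$. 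Everything hinges on the finiteness of the sets $\operatorname{out}(\beta)$, so that each thinning deletes only finitely much per layer and the union stays infinite, and on having the linear order available to pick the canonical representatives $p_n$; without the order one cannot select a single member from each finite layer, which is precisely where the hypothesis that $\mathcal{F}$ be linearly orderable is used.
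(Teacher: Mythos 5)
The paper's own proof of this lemma is the single sentence ``The proof is similar to the proof of Theorem \ref{Thm3.1},'' and your write-up is a faithful, correctly adapted expansion of exactly that argument: the $\epsilon$/$\beta$ partitions play the roles of the $\gamma$/$\beta$ partitions, the degree and layering constructions carry over with the counts adjusted ($n+6$ instead of $n+4$, five $\beta_j$'s instead of three), and the final use of the linear order together with the agreement of the two halves is the same. Your observation that the Kinna--Wagner conclusion makes the analogue of Claim~1 unnecessary is correct and is the only (harmless) deviation.
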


\begin{proof}
The proof is similar to the proof of Theorem \ref{Thm3.1}.
\end{proof}

\begin{prop}
We have that $\operatorname{RC}_7\Rightarrow\operatorname{LOC}_7^-$.
\end{prop}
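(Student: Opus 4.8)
The plan is to run the same strategy as in the proof of Theorem~\ref{Thm3.1} and to push everything onto the three auxiliary lemmas above. Let $\mathcal{F}$ be an infinite, linearly orderable family of pairwise disjoint $7$-element sets, fix a linear order on $\mathcal{F}$, put $X:=\bigcup\mathcal{F}$, and apply $\operatorname{RC}_7$ to $X$ to get an infinite $Y\subseteq X$ with a choice function $f:[Y]^7\to Y$. Setting $\mathcal{F}_i:=\{p\in\mathcal{F}\mid \lvert p\cap Y\rvert=i\}$, some $\mathcal{F}_i$ with $i\geq 1$ is infinite (as $Y$ is infinite), so passing to it we may assume every $p$ in an infinite subfamily $\mathcal{G}\subseteq\mathcal{F}$ splits canonically as $p=(p\cap Y)\cup(p\setminus Y)$ into a part of size $i$ and a part of size $7-i$. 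The cases $i\in\{1,6\}$ are immediate, since one part is a singleton that we simply select, and $i=7$ is immediate since then $p\in[Y]^7$ and $f(p)\in p$. This leaves the genuinely two-sided splits: $i\in\{3,4\}$, giving partition type $\{3,4\}$, and $i\in\{2,5\}$, giving partition type $\{2,5\}$.

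For each such $p$ I would look, as in Theorem~\ref{Thm3.1}, at the set $p^\ast$ of all undirected edges between the two parts of $p$, and put $Z:=\bigcup_{p\in\mathcal{G}}p^\ast$. For the split $\{3,4\}$ each $p^\ast$ has $3\cdot 4=12$ edges, and grouping them by their endpoint in the $3$-part, resp.\ in the $4$-part, realises $p^\ast$ simultaneously as a union of three $4$-element sets and of four $3$-element sets, which is exactly the hypothesis of Lemma~\ref{lemma2}. For the split $\{2,5\}$ each $p^\ast$ has $2\cdot 5=10$ edges and is simultaneously a union of two $5$-element sets and of five $2$-element sets, which is the hypothesis of Lemma~\ref{lemma3}. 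In either case the family $\{p^\ast\mid p\in\mathcal{G}\}$ consists of pairwise disjoint sets (the underlying $p$'s are disjoint) carrying these two partitions, and it inherits a linear order from $\mathcal{G}$. Applying $\operatorname{RC}_7$ to $Z$ yields an infinite $Q\subseteq Z$ with a choice function $g:[Q]^7\to Q$, and exactly as in the passage after Claim~1 in the proof of Theorem~\ref{Thm3.1} we may assume $p^\ast\subseteq Q$ on an infinite subfamily; this provides the choice function on $7$-element subsets demanded by Lemmas~\ref{lemma2} and~\ref{lemma3}.

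Next I would invoke Lemma~\ref{lemma2} (for $\{3,4\}$) or Lemma~\ref{lemma3} (for $\{2,5\}$) to extract an infinite subfamily carrying a Kinna--Wagner selection function on the edge sets, i.e.\ on which we may canonically pick a proper nonempty set of between $1$ and $11$ (resp.\ $1$ and $9$) edges from each $p^\ast$. The final move converts such an edge selection into a selection inside $p$ by a degree count, just as in Claim~1: reading the $k$ chosen edges as a bipartite graph, the degrees on the part of size $r$ sum to $k$, so whenever $r\nmid k$ that degree sequence is non-constant and its set of minimum-degree vertices is a canonical proper nonempty subset of the part. For $\{3,4\}$ no $k$ with $1\leq k\leq 11$ is divisible by both $3$ and $4$, and for $\{2,5\}$ no $k$ with $1\leq k\leq 9$ is divisible by both $2$ and $5$ (the two least common multiples, $12$ and $10$, are out of range); hence one of the two parts is always canonically reduced. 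Passing to an infinite subfamily on which the outcome is uniform, we either isolate a single element (and are done) or we reach an infinite, linearly orderable family of sets of size $2$ or $3$ still equipped with a residual choice function on $7$-element subsets, which I would feed, via the associated $\{2,3\}$ edge structure, into Lemma~\ref{lemma1}; here one re-applies $\operatorname{RC}_7$ to each freshly built set of edges (and, where convenient, to the complements $p\setminus Y$) to keep such a $7$-subset choice function available.

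The step needing the most care, and the reason the naive approach fails, is precisely this last one: $\operatorname{RC}_7$ implies neither $\operatorname{WOC}_2^-$ nor $\operatorname{WOC}_3^-$ (by Theorem~\ref{maintheorem}, taking the single prime $2$, resp.\ $3$, since $7$ is a multiple of neither), and a fortiori neither $\operatorname{LOC}_2^-$ nor $\operatorname{LOC}_3^-$, so the sets of size $2$ and $3$ thrown up by the reductions cannot be resolved in isolation. The whole argument must therefore be organised so that a choice function on $7$-element subsets of the relevant union of edges survives at every stage, and the divisibility facts used in the degree counts --- no number in $\{1,\dots,11\}$ being divisible by both $3$ and $4$, and none in $\{1,\dots,9\}$ by both $2$ and $5$ --- are exactly what make each reduction canonical and strictly size-decreasing. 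I expect the bulk of the work, and the only real obstacle, to lie in this bookkeeping: verifying that linear orderability and the auxiliary $7$-subset choice function persist through each reduction, and that the iterated degree counts terminate with a genuine choice function on an infinite subfamily rather than merely a Kinna--Wagner selection.
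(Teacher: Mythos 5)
Your proposal follows essentially the same route as the paper: the same reduction to the splits $\{3,4\}$ and $\{2,5\}$, the same passage to the bipartite edge sets $p^\ast$ via a second application of $\operatorname{RC}_7$, the same divisibility/degree counts, and the same use of Lemmas~\ref{lemma1}--\ref{lemma3} for the irreducible configurations. The only minor discrepancy is that you cannot literally ``assume $p^\ast\subseteq Q$'' by the Claim~1 argument of Theorem~\ref{Thm3.1} -- for the $12$-edge sets the intersection size $6$ (and for the $10$-edge sets certain even sizes) can leave a $2$- or $3$-element residue rather than a point -- but the paper handles exactly these leftover configurations with Lemma~\ref{lemma1} and a reduction between the two cases, just as you do at the end, so the content is the same.
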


\begin{proof}
Let $\mathcal{F}$ be a linearly orderable, infinite family of sets of size $7$. We apply $\operatorname{RC}_7$ on the set $X:=\bigcup\mathcal{F}$ to find an infinite subset $Y\subseteq X$ with a choice function $f:[Y]^7\to Y$. For every $i\leq 7$ we define
$$
\mathcal{F}_i:=\{ p\in\mathcal{F}\mid \lvert p\cap Y\rvert=i\}.
$$
Note that we can without loss of generality assume that $\mathcal{F}_2$ or $\mathcal{F}_3$ has infinite cardinality.\newline

\noindent \textit{Case 1: }$\mathcal{F}_3$ has infinite cardinality.\newline
For every $p\in\mathcal{F}_3$ let 
$$
p^*:=\{\{ a,x\}\in[p]^2\mid a\in p\cap Y\land x\in p\setminus Y\}
$$
and apply $\operatorname{RC}_7$ on the set $X^*:=\bigcup\{p^*\mid p\in\mathcal{F}_3\}$. We get an infinite subset $Y^*\subseteq X^*$ with a choice function $g:[Y^*]^7\to Y^*$. For every $1\leq i\leq 12$ define
$$
\mathcal{F}_i^*:=\{ p^*\mid p\in\mathcal{F}_3\land \lvert p^*\cap Y^*\rvert=i\}.
$$
There is an $1\leq i\leq 12$ such that $\lvert\mathcal{F}_i^*\rvert=\infty$. If $i\notin \{ 6,12\}$ we can choose an element from each $p$ with $p^*\in\mathcal{F}_i^*$ and therefore we are done. If $i=6$, the only case in which we cannot choose an element from all $p$ with $p^*\in\mathcal{F}_6^*$ is the one illustrated in Figure \ref{figure2}:

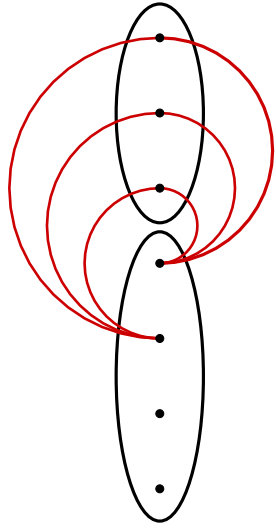
\begin{figure}[h]
\definecolor{cczzqq}{rgb}{0.8,0.6,0.}
\definecolor{qqwwzz}{rgb}{0.,0.4,0.6}
\definecolor{ccqqqq}{rgb}{0.8,0.,0.}
\begin{center}
\begin{tikzpicture}[line cap=round,line join=round,>=triangle 45,x=1.0cm,y=1.0cm]
\clip(0.8,1.4) rectangle (4.7,8.6);
\draw [rotate around={90.:(3.,6.995)},line width=1.2pt] (3.,6.995) ellipse (1.45555108793715cm and 0.5800034220546574cm);
\draw [rotate around={90.:(3.,3.495)},line width=1.2pt] (3.,3.495) ellipse (1.9244857593297757cm and 0.5800176185799213cm);
\draw [shift={(3.,6.5)},line width=1.2pt,color=ccqqqq]  plot[domain=-1.5707963267948966:1.5707963267948966,variable=\t]({1.*1.5*cos(\t r)+0.*1.5*sin(\t r)},{0.*1.5*cos(\t r)+1.*1.5*sin(\t r)});
\draw [shift={(3.,6.)},line width=1pt,color=ccqqqq]  plot[domain=-1.5707963267948966:1.5707963267948966,variable=\t]({1.*1.*cos(\t r)+0.*1.*sin(\t r)},{0.*1.*cos(\t r)+1.*1.*sin(\t r)});
\draw [shift={(3.,5.5)},line width=1pt,color=ccqqqq]  plot[domain=-1.5707963267948966:1.5707963267948966,variable=\t]({1.*0.5*cos(\t r)+0.*0.5*sin(\t r)},{0.*0.5*cos(\t r)+1.*0.5*sin(\t r)});
\draw [shift={(3.,5.)},line width=1pt,color=ccqqqq]  plot[domain=1.5707963267948966:4.71238898038469,variable=\t]({1.*1.*cos(\t r)+0.*1.*sin(\t r)},{0.*1.*cos(\t r)+1.*1.*sin(\t r)});
\draw [shift={(3.,5.5)},line width=1pt,color=ccqqqq]  plot[domain=1.5707963267948966:4.71238898038469,variable=\t]({1.*1.5*cos(\t r)+0.*1.5*sin(\t r)},{0.*1.5*cos(\t r)+1.*1.5*sin(\t r)});
\draw [shift={(3.,6.)},line width=1pt,color=ccqqqq]  plot[domain=1.5707963267948966:4.71238898038469,variable=\t]({1.*2.*cos(\t r)+0.*2.*sin(\t r)},{0.*2.*cos(\t r)+1.*2.*sin(\t r)});
\begin{scriptsize}
\draw [fill=black] (3.,8.) circle (1.5pt);
\draw [fill=black] (3.,7.) circle (1.5pt);
\draw [fill=black] (3.,6.) circle (1.5pt);
\draw [fill=black] (3.,5.) circle (1.5pt);
\draw [fill=black] (3.,4.) circle (1.5pt);
\draw [fill=black] (3.,3.) circle (1.5pt);
\draw [fill=black] (3.,2.) circle (1.5pt);
\end{scriptsize}
\end{tikzpicture}
\end{center}
\caption{Case $i=6$}
\label{figure2}
\end{figure}
But in this case we are done by Lemma \ref{lemma1}. And if $i=12$ we are done by Lemma \ref{lemma2}. \newline

\noindent \textit{Case 2: }$\mathcal{F}_2$ has infinite cardinality.\newline
For every $1\leq i\leq 10$ we define $\mathcal{F}_i^*$ as in Case 1. The only $i$ for which we cannot choose one element from each $p$ with $p^*\in\mathcal{F}_i^*$ or for which we cannot choose three elements from each $p$ with $p^*\in\mathcal{F}_i^*$ in order to reduce it to Case 1, is $i=10$. But in this case we are done by Lemma \ref{lemma3}.
\end{proof}

\section{When does $\operatorname{RC}_{m}\Rightarrow\operatorname{WOC}_{n}^-$ hold?}

\begin{lem}
\label{partition}
Let $m,r,s\in\omega\setminus\{0\},$ let $p_0,\dots, p_{r-1}$ be pairwise different prime numbers and $n_0,\dots, n_{s-1}\in\omega\setminus\{0\}$ with 
$$
\forall i< r~\exists j< s~(p_i\mid n_j).
$$
Moreover assume that there are $a_0,\dots, a_{r-1}\in\omega$ with 
$$
m=\sum_{i< r}a_ip_i.
$$ 
Define $n=\sum_{j\in s}n_j$. Let $\mathcal{F}$ be an infinite, well-ordered family of pairwise disjoint $n$-element sets. We assume that each $F\in\mathcal{F}$ is partitioned into sets $F_i$, $i< s$, with
$$
\lvert F_i\rvert=n_i.
$$
If $\operatorname{RC}_m$ holds, there is an infinite subset $\mathcal{G}\subseteq\mathcal{F}$ such that for each $G\in\mathcal{G}$ we can refine the partition on $G$.
\end{lem}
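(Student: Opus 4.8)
The plan is to reduce the refinement problem to a single prime and a single block-type, and then to run the counting-plus-graph machinery of Theorem \ref{Thm3.1}. First I would fix an index $i_0 < r$ and the associated block index $j_0 := j(i_0)$ with $p := p_{i_0} \mid n_{j_0}$, and apply $\operatorname{RC}_m$ not to all of $\bigcup \mathcal{F}$ but to the union $\bigcup\{F_{j_0} \mid F \in \mathcal{F}\}$ of the target blocks alone. This yields an infinite $Y$ together with a choice function $f\colon [Y]^m \to Y$ concentrated on the blocks we intend to split. Using the finite pigeonhole principle together with the well-ordering of $\mathcal{F}$, I pass to an infinite subfamily on which $\lvert F_{j_0} \cap Y\rvert$ is a constant $k$. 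If $0 < k < n_{j_0}$, then $F_{j_0} \cap Y$ and $F_{j_0} \setminus Y$ already split the block $F_{j_0}$, giving the desired refinement outright; and $k = 0$ is impossible on a cofinite subfamily since $Y$ is infinite. Hence I may assume $k = n_{j_0}$, i.e.\ each target block lies entirely inside $Y$.

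Now the task is to refine infinitely many pairwise disjoint blocks $B_F := F_{j_0}$, each of size $q := n_{j_0}$ with $p \mid q$, using the choice function $f$ on the $m$-subsets of $\bigcup_F B_F$. The counting engine is as follows. Writing $m = \sum_{i<r} a_i p_i$ lets me assemble $m$-element test sets out of $p_i$-element chunks placed inside target blocks; varying one $p$-chunk through a fixed block $B_F$ while holding a context of the remaining $m - p$ points fixed, I count, for each $b \in B_F$, how often $f$ selects $b$. The total number of such hits landing in $B_F$ is governed by binomial coefficients of the form $\binom{q}{p}$, and Lemma \ref{binom} (Kummer) shows this count is prime to $p$ whenever $p \| q$. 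Since $\lvert B_F\rvert = q$ is divisible by $p$, the hit counts cannot all coincide, so the elements of $B_F$ attaining the maximal count form a canonical proper nonempty subset, that is, a refinement of $\{F_i \mid i < s\}$ on $B_F$.

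This direct count fails precisely in the degenerate configurations where the hit counts are forced to be uniform (most notably when $q$ is a power of $p$, where every proper binomial $\binom{q}{d}$ is divisible by $p$). In that case I switch to the cross-family construction of Theorem \ref{Thm3.1}: using $f$ I define a degree function and a directed graph on the target blocks, and I split into the two usual cases. If some degree value $n$ is attained by infinitely many blocks, a single fixed finite probe set lets me make a uniform selection, as in Case~1 of Theorem \ref{Thm3.1}. Otherwise the degrees are unbounded, and I replicate Claims~1 and~2 there to carve out an infinite subfamily decomposed into finite levels $(J_n)_{n\in\omega}$ whose cross-level edges all point downward; the well-ordering of $\mathcal{F}$ (which, being stronger than the mere linear order used in Theorem \ref{Thm3.1}, is more than enough) then supplies a canonical representative of each level and, from it, a uniform choice of a proper subset of each block.

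The main obstacle is exactly this last, uniform-count case, which includes the genuinely hard prime-power instances (for example the exceptional $\operatorname{RC}_2 \Rightarrow \operatorname{WOC}_4^-$ behaviour): here the local counting carries no information and all the work is pushed into the ranked cross-family decomposition and the final appeal to well-ordering, mirroring the most delicate part of Theorem \ref{Thm3.1}. Throughout, the two properties that must be checked at each selection step are that the chosen subset of a block is both proper and nonempty, i.e.\ that it is a \emph{genuine} refinement, which is precisely what the non-divisibility supplied by Lemma \ref{binom}, and the parity/degree bookkeeping inherited from Theorem \ref{Thm3.1}, guarantee.
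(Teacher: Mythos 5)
There is a genuine gap here, and it sits exactly where you have placed a pointer instead of an argument. Your ``counting engine'' only works when, for a fixed context $Z_0$ of $m-p$ points and a varying chunk $x\in[B_F]^p$, the choice function always selects an element of $x$; only then is the total hit count $\binom{q}{p}$, and only then does $q\nmid\binom{q}{p}$ (which, by Kummer, holds whenever $p\mid q$ --- including when $q$ is a power of $p$, so your diagnosis of the ``degenerate configuration'' is off) yield a proper nonempty subset of $B_F$. The real obstruction, which your proposal never confronts, is that $f(Z_0\cup x)$ may land in the context $Z_0$. The paper's entire proof is organized around this: it applies $\operatorname{RC}_m$ not to atoms of one block type but to the set of transversals $E_F=\{x\in[F]^s\mid \lvert x\cap F_j\rvert=1 \text{ for all } j<s\}$, and then proves an iterated Claim with four possible outcomes --- refine the partition on $I$, refine an auxiliary partition of $[C_I]^p$, shrink $C_I$ to a proper nonempty $B_I$ (this is where Kummer is used), or, crucially, when $f$ constantly selects from the context on a whole partition class, extract a choice function on \emph{smaller} sets, reducing the budget from $l+p$ to $l$. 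The hypothesis $m=\sum_{i<r}a_ip_i$ is consumed $p_i$ at a time through this descent, and the well-ordering of $\mathcal{F}$ is used precisely to make the context-selection canonical in that last case. None of this appears in your write-up.

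Your fallback --- ``switch to the cross-family construction of Theorem \ref{Thm3.1}'' --- does not close the gap. That construction is tailored to a single decomposition $5=2+3$ of one edge set into two complementary partitions, and its degree/graph bookkeeping does not obviously extend to an arbitrary sum $\sum_{i<r}a_ip_i$ with $r>2$ or $a_i>1$, where one must manage several chunk sizes and repeatedly re-enter the argument with a reduced budget; the paper deliberately replaces that ad hoc graph argument by the transversal set $E_F$ and the four-outcome descent. To repair your proof you would need, at minimum: a canonical way to choose (or quantify over) the contexts $Z_0$ so that the resulting subset of each block is well-defined without further choice; an explicit treatment of the case $f(Z_0\cup x)\in Z_0$, showing it either refines something or strictly reduces a well-founded parameter; and a termination argument tied to the decomposition $m=\sum_{i<r}a_ip_i$. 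As it stands, the hardest case of the lemma is asserted rather than proved.
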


\begin{proof}
For every $F\in\mathcal{F}$ we define
$$
E_F:=\{ x\in [F]^s\mid \lvert x\cap F_j\rvert=1\text{ for all }j< s\}.
$$
Apply $\operatorname{RC}_m$ to the set $\bigcup_{F\in\mathcal{F}}E_F$. Then there is an infinite subset $Y\subseteq\bigcup_{F\in\mathcal{F}}E_F$ with a choice function 
$$
f:[Y]^m\to Y.
$$
For all $F\in\mathcal{F}$ let
$$
D_F:=E_F\cap Y.
$$
Define $\mathcal{G}:=\{ F\in\mathcal{F}\mid D_F\not = \emptyset\}.$\newline

\noindent \textbf{Claim: }Let $\mathcal{H}\subseteq\mathcal{G}$ be an infinite subset, and let $l=\sum_{i< r}b_ip_i$ with $b_i\leq a_i$ for all $i< r$. Let $p=p_{i_0}\in\{ p_i\mid i< r\}$ such that $b_{i_0}\not =0$. For all $H\in\mathcal{H}$ let $\emptyset\not =C_H\subseteq E_H$. For all $H\in\mathcal{H}$ let $\{ C_H^k\mid k< t_H\}$ with $t_H\in\omega\setminus\{ 0\}$ be a partition of $[C_H]^p$. We assume that there is a choice function 
$$
h:\left[\bigcup_{H\in\mathcal{H}}C_H\right]^{l+p}\to \bigcup_{H\in\mathcal{H}} C_H
$$
with $0\leq l\leq m-p$. Then there is an infinite subset $\mathcal{I}\subseteq\mathcal{H}$ such that
\begin{enumerate}
	\item there is a choice function 
$$
i:\left [\bigcup_{I\in\mathcal{I}} C_I\right ]^{l-p}\to\bigcup_{I\in\mathcal{I}}C_I
$$
or
	\item we can refine the partition $\{I_i\mid i< s\}$ on each $I\in\mathcal{I}$ or
	\item for each $I\in\mathcal{I}$ we can refine the partition $\{ C_I^k\mid k< t_I\}$ on $[C_I]^p$ or
	\item for each $I\in\mathcal{I}$ we can choose 
$$
\emptyset\not =B_I\subsetneq C_I.
$$
\end{enumerate}
Moreover, Case 1 only occurs when $l-p>p_i$ for an $i< r$.
\begin{proof}[Proof of the Claim]
Assume that there is an infinite subset $\mathcal{I}\subseteq\mathcal{H}$ such that there is a $j_0< s$ with 
$$
n_{j_0}\nmid \lvert C_I\rvert
$$
for all $I\in \mathcal{I}$. Let $I\in\mathcal{I}$. For all $z\in I$ define 
$$
\#z:=\lvert\{ x\in C_I\mid z\in x\}\rvert.
$$
Since $\sum_{z\in I_{j_0}}\#z=\lvert C_I\rvert$, it follows that 
$$
\emptyset\not =\{z\in I_{j_0}\mid \forall z^\prime \in I_{j_0} (\#z\leq \#z^\prime))\}\subsetneq I_{j_0}.
$$
Therefore we can refine the partition on each $I\in\mathcal{I}$ and we are done. So we can assume that for all $j< s$ and all $H\in\mathcal{H}$ we have that
$$
n_j\mid \lvert C_I\rvert.
$$ 
There are three cases:\newline

\noindent \textit{Case 1: }There is a $Z_0\in \left [\bigcup_{H\in\mathcal{H}} C_H\right]^l$ such that there is an infinite subset $\mathcal{I}\subseteq\mathcal{H}$ with
$$
\forall I\in\mathcal{I}~\forall x\in [C_I]^p~(h(Z_0\cup x)\in x).
$$
Without loss of generality assume that $Z_0\cap\bigcup_{I\in\mathcal{I}} C_I =\emptyset$. For every $I\in\mathcal{I}$ and all $z\in C_I$ define
$$
\operatorname{deg}_I(z):=\lvert \left\{ x\in [C_I]^p\mid h(Z_0\cup x)=z\right\}\rvert.
$$
Note that $\sum_{z\in C_I}\operatorname{deg}_I(z)=\lvert[ C_I]^p\rvert=\begin{pmatrix}\lvert C_I\rvert \\ p\end{pmatrix}$. We have that $p\mid \lvert C_I\rvert$ because $n_j\mid \lvert C_I\rvert$ for all $j< s$. By Lemma \ref{binom} it follows that $\lvert C_I\rvert\nmid\begin{pmatrix}\lvert C_I\rvert\\ p\end{pmatrix}$. For each $I$ choose
$$
\emptyset\not =B_I:=\{ z\in C_I\mid \forall z^\prime\in C_I (\operatorname{deg}_I(z)\leq \operatorname{deg}_I(z^\prime))\}\subsetneq C_I.
$$

\noindent \textit{Case 2: }There is a $Z_0\in \left [\bigcup_{H\in\mathcal{H}} C_H\right]^l$ such that there is an infinite subset $\mathcal{I}\subseteq\mathcal{H}$ with
$$
\forall I\in\mathcal{I}~\exists k< t_I~\exists x,x^\prime\in C_I^k ~(h(Z_0\cup x)\in Z_0\land h(Z_0\cup x^\prime)\in x^\prime).
$$
In this case we can refine the partition on $[C_I]^p$ for each $I\in\mathcal{I}$.\newline

\noindent \textit{Case 3: }There is a $Z_0\in \left [\bigcup_{H\in\mathcal{H}} C_H\right]^l$ such that there is an infinite subset $\mathcal{I}\subseteq\mathcal{H}$ with
$$
\forall I\in\mathcal{I}~\exists k< t_I~(\forall x\in C_I^k~h(Z_0\cup x)\in Z_0\land \exists x,x^\prime\in C_I^k~(h(Z_0\cup x)\not = h(Z_0\cup x^\prime))).
$$
In this case we can refine the partition on $[C_I]^p$ for each $I\in\mathcal{I}$.

\noindent \textit{Case 4: } For all $Z\in  \left [\bigcup_{H\in\mathcal{H}} C_H\right]^l$ there are infinitely many $H\in\mathcal{H}$ such that 
\begin{equation}
\label{Bedingung}
\exists k< t_H~\forall x,x^\prime\in C_H^k~(h(Z\cup x)=h(Z\cup x^\prime)\in Z).
\end{equation}
For each $Z\in  \left [\bigcup_{H\in\mathcal{H}} C_H\right]^l$ let $H_Z\in \mathcal{H}$ be the smallest element with respect to the well-ordering in $\mathcal{H}$ such that (\ref{Bedingung}) holds. Then let $k_0< t_{H_Z}$ be smallest possible such that 
$$
\forall x,x^\prime\in C_H^{k_0}(h(Z\cup x)=h(Z\cup x^\prime)\in Z).
$$
We define an function $i: \left [\bigcup_{H\in\mathcal{H}} C_H\right]^l\to  \bigcup_{H\in\mathcal{H}} C_H$ by stipulating $i(Z):=h(Z\cup x)$, where $x\in C_H^{k_0}.$ Note that $i$ is well-defined. So 1. of the Claim is satisfied.
\renewcommand{\qedsymbol}{{\large$\boldsymbol{\dashv}_{\text{Claim}}$}}
\end{proof}
Now we apply the Claim iteratively until we can find an infinite subfamily of $\mathcal{G}$ with a refined partition on each $G\in\mathcal{G}$.
\end{proof}

\begin{prop}
\label{positive}
Let $m,n\in\omega\setminus \{0,1\}$ and assume that for all prime numbers $p_0,\dots , p_{r-1}$, $r\in\omega$, such that there are $a_0,\dots, a_{r-1}\in\omega$ with 
$$
n=\sum_{i< r} a_ip_i,
$$
we can find $b_0,\dots, b_{r-1}\in \omega$ with
$$
m=\sum_{i< r}b_ip_i.
$$
Then we have that
$$
\operatorname{RC}_m\Rightarrow \operatorname{WOC}_n^-.
$$
\end{prop}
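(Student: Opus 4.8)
The plan is to reduce the statement $\operatorname{WOC}_n^-$ to repeated applications of Lemma \ref{partition}, which is the main engine we have already built. Given an infinite, well-ordered family $\mathcal{F}$ of $n$-element sets, we want to produce an infinite subfamily together with a choice function. The strategy is to start from the trivial partition of each $F\in\mathcal{F}$ (the whole set $F$ as a single block) and successively refine it, using the arithmetic hypothesis to guarantee that at each stage the number-theoretic side condition of Lemma \ref{partition} is met, until the partition is refined all the way down to singletons — at which point we have a choice function.

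First I would set up the induction. At a generic stage we have an infinite well-ordered subfamily $\mathcal{G}\subseteq\mathcal{F}$ and a partition of each $G\in\mathcal{G}$ into blocks of sizes $n_0,\dots,n_{s-1}$ (with $\sum_{j<s} n_j = n$), chosen uniformly so that $\mathcal{G}$ remains well-ordered and the block structure is definable. To apply Lemma \ref{partition} I must exhibit primes $p_0,\dots,p_{r-1}$ and coefficients $a_0,\dots,a_{r-1}\in\omega$ with $m=\sum_{i<r} a_i p_i$, subject to the compatibility requirement that every $p_i$ divides some block size $n_j$. This is exactly where the hypothesis of the Proposition enters: since $n=\sum_{j<s} n_j$ and each $n_j$ is a sum of primes (as $n_j\geq 2$, or is handled when $n_j=1$ is already a singleton), the set of primes dividing the block sizes gives a representation of $n$ as a nonnegative integer combination $n=\sum a_i p_i$; the hypothesis then furnishes a matching representation $m=\sum b_i p_i$, which is precisely the input Lemma \ref{partition} needs. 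Applying the lemma yields an infinite subfamily on which the partition can be properly refined.

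The key steps, in order, are: (i) translate the current block sizes into the prime data required by Lemma \ref{partition}; (ii) verify the divisibility side condition $\forall i<r\,\exists j<s\,(p_i\mid n_j)$ by choosing the $p_i$ to be primes dividing the $n_j$; (iii) invoke the Proposition's hypothesis to obtain coefficients $b_i$ writing $m$ over the same primes; (iv) apply Lemma \ref{partition} to pass to an infinite subfamily with a strictly finer partition; and (v) iterate. Because each refinement strictly decreases the maximum block size while the family stays well-ordered and infinite, the process terminates after finitely many steps with all blocks of size $1$, i.e.\ with a choice function on an infinite subfamily. A singleton block needs no further refinement, so once a block reaches size $1$ we simply leave it fixed and refine the remaining larger blocks.

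The main obstacle I expect is bookkeeping the interaction between the two hypotheses at each refinement stage: I must make sure that the primes I extract from the \emph{current} block sizes always admit a representation of $m$, and that the refinement delivered by Lemma \ref{partition} produces block sizes to which the hypothesis still applies at the next stage. The subtle point is that after refining, a block of size $n_j$ may split into pieces whose sizes are \emph{new} summands, and I need these new summands still to be expressible via primes over which $m$ decomposes. The cleanest way around this is to observe that any refinement of a partition of $n$ only introduces block sizes that are themselves sums of primes already available (since every positive integer $\geq 2$ is a sum of primes and the relevant primes are inherited), so the hypothesis is preserved along the whole induction; carefully arguing this preservation, rather than the individual applications of Lemma \ref{partition}, is where the real work lies.
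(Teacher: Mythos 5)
Your proposal follows the paper's proof: starting from the trivial partition, one repeatedly applies Lemma~\ref{partition}, at each stage choosing for every block a prime dividing its size, so that $n$ becomes a nonnegative integer combination of those primes and the Proposition's hypothesis hands back a matching representation of $m$ over the same primes --- exactly the input Lemma~\ref{partition} requires. The one point where your plan overshoots is the termination: you propose to keep refining ``the remaining larger blocks'' after some block has become a singleton, aiming for a partition into singletons. That continuation is both unnecessary and unjustified. It is unnecessary because a singleton block already yields a choice function (take its unique element), which is where the paper stops. It is unjustified because once blocks of size $1$ are present, the primes dividing the remaining block sizes no longer give a representation of $n$ as $\sum a_ip_i$ (each singleton contributes a $1$, which no prime divides), so the Proposition's hypothesis can no longer be invoked to represent $m$, and Lemma~\ref{partition}'s side condition cannot be met. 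Replacing ``refine until all blocks are singletons'' by ``stop as soon as one block is a singleton'' makes your argument coincide with the paper's.
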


\begin{proof}
Let $\mathcal{F}$ be an infinite, well-ordered family of $n$-element sets. Assume that for every $F\in\mathcal{F}$ there is a partition of $F$ into sets $F_0,\dots, F_{s-1}$, for an $s\in\omega$. If there is an $i< s$ with $\lvert F_i\rvert =1$, we are done. Otherwise we can apply Lemma \ref{partition} in order to find an infinite subfamily $\mathcal{G}\subseteq\mathcal{F}$ with a refined partition on each $G\in\mathcal{G}$. We do the same process until there is a part containing exactly one element.
\end{proof}

\begin{cor}
\label{Cor6.3}
For every $n\in\omega$ we have that
$$
\operatorname{RC}_n\Rightarrow \operatorname{WOC}_n^-.
$$
\end{cor}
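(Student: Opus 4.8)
The plan is to obtain this as the special case $m=n$ of Proposition~\ref{positive}, after disposing of the two degenerate values $n\in\{0,1\}$ that Proposition~\ref{positive} excludes.

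First I would dispatch $n\in\{0,1\}$ by hand. A family of $0$-element sets contains at most the empty set, so there is no \emph{infinite} such family and $\operatorname{WOC}_0^-$ holds vacuously. A family of $1$-element sets admits a choice function outright, sending each singleton to its unique member, so $\operatorname{WOC}_1^-$ is already a theorem of $\operatorname{ZF}$. In both cases the implication $\operatorname{RC}_n\Rightarrow\operatorname{WOC}_n^-$ is trivially true, independently of the antecedent.

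For $n\geq 2$ I would simply invoke Proposition~\ref{positive} with $m:=n$. Its hypothesis asks that every representation $n=\sum_{i<r}a_ip_i$ of $n$ as a non-negative integer combination of primes $p_0,\dots,p_{r-1}$ can be matched by a representation $m=\sum_{i<r}b_ip_i$ over the \emph{same} list of primes. When $m=n$ this is immediate: given such $a_0,\dots,a_{r-1}$, set $b_i:=a_i$ for every $i<r$, so that $\sum_{i<r}b_ip_i=\sum_{i<r}a_ip_i=n=m$. Hence the hypothesis of Proposition~\ref{positive} is satisfied with no arithmetic to verify, and the Proposition delivers $\operatorname{RC}_n\Rightarrow\operatorname{WOC}_n^-$ at once.

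There is essentially no obstacle in this corollary, since all the genuine work has already been carried out in Proposition~\ref{positive} and, beneath it, in the partition-refinement argument of Lemma~\ref{partition}. The only point that requires any care is the boundary: Proposition~\ref{positive} is stated for $n\in\omega\setminus\{0,1\}$, so one must remember to treat $n=0$ and $n=1$ via the elementary remarks above rather than through the Proposition itself.
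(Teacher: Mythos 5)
Your proposal is correct and matches the paper's intent exactly: the corollary is the instantiation $m=n$ of Proposition~\ref{positive}, where the hypothesis is trivially satisfied by taking $b_i=a_i$. Your separate treatment of the degenerate cases $n\in\{0,1\}$ is a harmless and reasonable bit of extra care that the paper leaves implicit.
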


\section{When does $\operatorname{RC}_m\not \Rightarrow \operatorname{WOC}_n^-$ hold?}

In this section we will show that for all $n,m\in\omega\setminus \{0,1\}$ which do not satisfy the conditions of Proposition \ref{positive} we have that
$$
\operatorname{RC}_m\not\Rightarrow \operatorname{WOC}_n^-
$$
in $\operatorname{ZF}$. In a first step we will construct suitable Fraenkel-Mostowski permutation models to show that $\operatorname{RC}_m\not\Rightarrow \operatorname{WOC}_n^-$  in $\operatorname{ZFA}$. We will then see that both statements, $\operatorname{RC}_m$ and $\operatorname{WOC}_n^-$, are injectively boundable. So by \cite[Theorem 3A3]{Pincus72} the result is transferable into $\operatorname{ZF}$.

Let $p_0$ and $p_1$ be two prime numbers. We start with a ground model $\mathcal{M}_{p_0,p_1}$ of $\operatorname{ZFA}+\operatorname{AC}$ with a set of atoms
$$
\mathcal{A}:=\bigcup \{ A_i\mid i\in \omega\}\cup\bigcup \{ B_j\mid j\in\omega\},
$$
where for all $i,j\in\omega$ the sets $A_i$ and $B_j$ are called blocks. These blocks have the following properties:
\begin{enumerate}
	\item For all $i\in\omega$, $A_i=\{ a_{i,k}\mid k< p_0\}$ and $B_i=\{b_{i,k}\mid k< p_1\}$ with $\lvert A_i\rvert=p_0$ and $\lvert B_i\rvert=p_1$.
	\item The blocks are pairwise disjoint.
\end{enumerate}
For all $i,j\in\omega$ we define a permutation on $\mathcal{A}$ as follows:
\begin{enumerate}
\setcounter{enumi}{2}
	\item For all $i\in \omega$ and all $k< p_0$ let
$$
\alpha_i(a_{i,k}):=\begin{cases}a_{i,k+1} &\text{ if } k< p_0-1\\
a_{i,0} &\text{ otherwise}\end{cases}
$$
and $\alpha_i(a)=a$ for all $a\in\mathcal{A}\setminus A_i$. Analogously for all $j\in \omega$ and all $k< p_1$ let
$$
\beta_j(b_{j,k}):=\begin{cases}b_{j,k+1} &\text{ if } k< p_1-1\\
b_{i,0} &\text{ otherwise}\end{cases}
$$
and $\beta_j(b)=b$ for all $b\in\mathcal{A}\setminus B_j$.
\end{enumerate}
Now we define an abelian group $G$ of permutations of $\mathcal{A}$ by requiring
$$
\phi\in G\iff \phi=\alpha\circ\beta,
$$
where
$$
\alpha=\prod_{i\in\omega}\alpha_i^{k_i}\text{ with } k_i< p_0 \text{ for each }i\in\omega
$$
and
$$
\beta=\prod_{j\in\omega}\beta_j^{l_j}\text{ with }l_j< p_1 \text{ for each }j\in\omega.
$$
Let $\mathcal{F}$ be the normal filter on $G$ generated by the subgroups
$$
\fix_G(E)=\{ \phi\in G\mid \forall a\in E(\phi(a)=a)\}
$$
with $E\in\fin(\mathcal{A})$. Let $\mathcal{V}_{p_0,p_1}$ be the class of all hereditarily symmetric sets.

\begin{remark}
We can also work with $k$ blocks of size $p_0,\dots, p_{k-1}$, where $p_i$ is a prime number for every $i< k$. The corresponding permutation model is denoted by $\mathcal{V}_{p_0,\dots, p_{k-1}}$.
\end{remark}

\begin{defi}
A set $E\in\fin(\mathcal{A})$ is closed if and only if for all $i,j\in\omega$ we have that 
$$
A_i\cap E\not =\emptyset\Rightarrow A_i\subseteq E \text{ and } B_j\cap E\not =\emptyset\Rightarrow B_j\subseteq E.
$$
\end{defi}

\begin{remark}
Let $A$ and $B$ be two blocks in $\{A_i\mid i\in\omega\}\cup \{ B_j\mid j\in\omega\}$. We define
$$
A<B:\iff\begin{cases} A=A_i\land B=B_j \text{ or}\\
A=A_i\land B=A_j\land i<j\text{ or}\\
A=B_i\land B=B_j\land i<j.\end{cases}
$$
Moreover, for distinct closed sets $E=\bigcup\{F_0,\dots F_n\}\in\fin(\mathcal{A})$ and $E^\prime=\bigcup\{ F_0^\prime,\dots, F_m^\prime\}\in\fin(\mathcal{A})$ with blocks $F_0,\dots, F_n,$ $F_0^\prime,\dots, F_m^\prime$ let
$$
E\prec E^\prime:\iff \text{ The $<$-least block in the symmetric difference } \{F_0,\dots, F_n\}\Delta\{F^\prime_0,\dots, F_m^\prime\} \text{ belongs to }E.
$$
This defines a well-ordering on the set of all closed supports.
\end{remark}

\begin{lem}
\label{model1}
Let $n\in\omega\setminus\{0,1\}$ and let $p_0$ and $p_1$ be two prime numbers such that 
$$
n=cp_0+dp_1\not =0
$$
for $c,d\in\omega$. Then we have that
$$
\mathcal{V}_{p_0,p_1}\models \lnot\operatorname{WOC}_n^-.
$$
\end{lem}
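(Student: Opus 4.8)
The plan is to exhibit in $\mathcal{V}_{p_0,p_1}$ a single infinite, well-orderable family of $n$-element sets that admits no choice function on any infinite subfamily. Since $n=cp_0+dp_1\neq 0$ with $c,d\in\omega$, I will build each member as a disjoint union of $c$ of the $A$-blocks and $d$ of the $B$-blocks (at least one of $c,d$ is nonzero). Concretely, for each $k\in\omega$ set
$$
F_k:=\bigcup_{l< c}A_{ck+l}\ \cup\ \bigcup_{l< d}B_{dk+l},
$$
so that $\lvert F_k\rvert=cp_0+dp_1=n$ and the $F_k$ are pairwise disjoint. Let $\mathcal{F}:=\{F_k\mid k\in\omega\}$. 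This is the family I will use to refute $\operatorname{WOC}_n^-$ in the model.

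First I would verify that $\mathcal{F}$ lies in $\mathcal{V}_{p_0,p_1}$ and is well-orderable there. The crucial structural fact is that every $\phi\in G$ stabilizes each block setwise, since the generators $\alpha_i,\beta_j$ permute atoms only within a single block. Hence each $F_k$, being a union of whole blocks, is fixed by all of $G$; consequently $\mathcal{F}$ and the enumeration $k\mapsto F_k$ are fixed pointwise by $G$, so they are symmetric with empty support and hereditarily symmetric. In particular $\mathcal{F}\in\mathcal{V}_{p_0,p_1}$ is infinite, and the canonical well-ordering $F_k\prec F_l:\iff k< l$ is itself fixed by $G$ and therefore belongs to the model, so $\mathcal{F}$ is well-orderable.

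The heart of the argument is that no infinite subfamily $\mathcal{G}\subseteq\mathcal{F}$ carries a choice function in the model. Suppose toward a contradiction that $h$ is such a function and let $E\in\fin(\mathcal{A})$ be a finite support of $h$, so that $\fix_G(E)\subseteq\Sym_G(h)$. As $E$ meets only finitely many blocks and distinct $F_k$ use disjoint blocks, I can choose $F_k\in\dom(h)$ with $F_k\cap E=\emptyset$. Then $h(F_k)$ lies in some block $C\subseteq F_k$, and the corresponding cyclic generator $\sigma$ (namely $\alpha_i$ if $C=A_i$, or $\beta_j$ if $C=B_j$) satisfies $\sigma\in\fix_G(E)$ because $C\cap E=\emptyset$. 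Since $\sigma$ fixes $F_k$ setwise, $\sigma h=h$ yields $h(F_k)=\sigma\bigl(h(F_k)\bigr)$; but $\sigma$ acts as a $p_0$- or $p_1$-cycle on $C$ and so has no fixed point there (as $p_0,p_1\geq 2$), a contradiction. Hence $\mathcal{F}$ witnesses $\mathcal{V}_{p_0,p_1}\models\lnot\operatorname{WOC}_n^-$.

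I expect the only point requiring genuine care to be the verification that the well-ordering of $\mathcal{F}$ really lies in the model; but this is immediate precisely because every permutation in $G$ stabilizes each block setwise, so any object defined from unions of whole blocks is fixed by the entire group. The same feature is exactly what defeats any candidate choice function: well-orderability of the family is free, while selecting a single atom from a block is incompatible with that block's cyclic symmetry, which no finite support can suppress on infinitely many members.
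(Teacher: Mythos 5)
Your proposal is correct and follows essentially the same route as the paper: the same family built from $c$ whole $A$-blocks and $d$ whole $B$-blocks per member (up to an immaterial relabelling of indices), the same observation that unions of whole blocks are fixed by all of $G$ so the family and its enumeration have empty support, and the same support-versus-cyclic-symmetry contradiction (the paper applies $\alpha_i\circ\beta_i$ where you apply the single generator of the block containing the chosen atom, which is an inessential variant).
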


\begin{proof}Define
$$
\mathcal{F}:=\{ A_l\cup A_{l+1}\cup\dots\cup A_{l+c-1}\cup B_{l+c}\cup\dots\cup B_{l+c+d-1}\mid l=k(c+d)\text{ for a }k\in\omega\}.
$$
This is an infinite family of $n$-element sets. Since the empty set is a support of $\mathcal{F}$, we have that $\mathcal{F}\in\mathcal{V}_{p_0,p_1}$. Moreover, $\mathcal{F}$ is well-orderable in $\mathcal{V}_{p_0,p_1}$. Assume by contradiction that there is an infinite subset $\mathcal{G}\subseteq\mathcal{F}$ with a choice function
$$
g:\mathcal{G}\to \bigcup\mathcal{G}
$$
in $\mathcal{V}_{p_0,p_1}$. Let $E_g\in\fin(\mathcal{A})$ be a closed support of $g$. Since $E_g$ is finite, there is a $G_0\in \mathcal{G}$ such that $G_0\cap E_g=\emptyset$. Then there is an $i\in\omega$ with 
$$
g(G_0)\in A_i\cup B_i.
$$
Define $\gamma_i:=\alpha_i\circ\beta_i$. We have that
$$
g(\gamma_i(G_0))=g(G_0)\not = \gamma_i(g(G_0)).
$$
So $E_g$ is not a support of $g$ which is a contradiction.
\end{proof}

\begin{lem}
\label{model2}
Let $m\in\omega\setminus\{0,1\}$ and let $p_0,p_1$ be prime numbers such that
$$
m\not=cp_0+dp_1
$$
for all $c,d\in\omega$. Then we have that
$$
\mathcal{V}_{p_0,p_1}\models \operatorname{RC}_m.
$$
\end{lem}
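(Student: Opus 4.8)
The plan is to start with an arbitrary infinite set $X\in\mathcal{V}_{p_0,p_1}$ together with a closed support $E$, and to produce, inside the model, an infinite subset $Y\subseteq X$ and a choice function $c$ on $[Y]^m$, i.e. with $c(z)\in z$ for all $z\in[Y]^m$. The first observation, which is special to this model, is that every generator $\alpha_i,\beta_j$ of $G$ maps each block to itself, so every $\phi\in G$ fixes each block setwise. Consequently, writing $S_x$ for the least closed support of $x$ (which exists here, since the intersection of two closed supports is again a support), we get $S_{\phi(x)}=\phi(S_x)=S_x$ for every $\phi\in\fix_G(E)$: least closed supports are $\fix_G(E)$-invariant, and the well-ordering $\prec$ on closed supports from the preceding Remark is available as a canonical parameter. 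I would first thin $X$ inside $\mathcal{V}_{p_0,p_1}$ to an infinite $Y$ with a uniform support pattern over $E$, taking care that $Y$ is definable from $X$, $E$ and $\prec$ and hence hereditarily symmetric.

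The combinatorial heart is a fixed-point count. Fix a closed support $E^*\supseteq E$ that is meant to support the choice function, and for $z\in[Y]^m$ consider its setwise stabiliser $H_z:=\{\phi\in\fix_G(E^*)\mid \phi[z]=z\}$. Since $\fix_G(E^*)$ is the direct product of the cyclic groups $\langle\alpha_i\rangle\cong\mathbb{Z}/p_0$ and $\langle\beta_j\rangle\cong\mathbb{Z}/p_1$ ranging over the blocks disjoint from $E^*$, the group $H_z$ is finite abelian of order $p_0^{a}p_1^{b}$. It acts on the $m$-element set $z$, and every orbit has size dividing $p_0^{a}p_1^{b}$, hence of the form $p_0^{s}p_1^{t}$. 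If no point of $z$ were fixed, every orbit would have size $>1$ and would therefore be a positive multiple of $p_0$ or of $p_1$; summing the orbit sizes would give $m=cp_0+dp_1$ for some $c,d\in\omega$, contradicting the hypothesis. Hence $F_z:=\{x\in z\mid \forall\phi\in H_z\;\phi(x)=x\}$ is non-empty. Because $G$ is abelian one checks $H_{\psi[z]}=H_z$ and $\psi[F_z]=F_{\psi[z]}$ for every $\psi\in\fix_G(E^*)$, so the assignment $z\mapsto F_z$ is $\fix_G(E^*)$-equivariant and lies in $\mathcal{V}_{p_0,p_1}$ with support $E^*$.

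It remains to turn the equivariant, non-empty selection $z\mapsto F_z$ into a single-valued choice. Here I would again exploit that least supports are $\fix_G(E^*)$-invariant: letting $c(z)$ be an element of $F_z$ whose least support $S_x$ is $\prec$-minimal yields an equivariant, hence symmetric, map, provided that this $\prec$-minimal support is realised by exactly one element of $F_z$. This uniqueness is precisely where the argument can fail for representable $m$, and it is the mechanism underlying Lemma \ref{model1}: two distinct fixed points sharing the same minimal support — for example two atoms lying in the same partially covered block — cannot be separated symmetrically. I expect this to be the main obstacle.

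To overcome it I would perform a further thinning of $Y$, carried out inside the model and analogous to the case analysis in the proof of Theorem \ref{Thm3.1}: using the non-representability of $m$ one arranges that in every $z\in[Y]^m$ some fixed point with a uniquely attained minimal support survives, so that $c$ becomes single-valued. One then verifies that $Y$ and $c$ are hereditarily symmetric, both supported by $E^*$, which gives $\mathcal{V}_{p_0,p_1}\models\operatorname{RC}_m$. The subsequent passage from $\operatorname{ZFA}$ to $\operatorname{ZF}$ is handled separately through the injective boundability of $\operatorname{RC}_m$ and Pincus' transfer theorem, and is not part of this lemma.
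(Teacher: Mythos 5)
Your central combinatorial idea is sound and is in fact a more conceptual route than the one the paper takes. Since every element of $G$ has order dividing $p_0p_1$, the image of the setwise stabiliser $H_z$ in $\Sym(z)$ is a finite abelian group of order $p_0^{a}p_1^{b}$ (note that $H_z$ itself is infinite, since it contains $\fix_G(E^*\cup\bigcup_{w\in z}S_w)$; only its image in $\Sym(z)$ is finite), so all orbits on $z$ have size $p_0^{s}p_1^{t}$ and the non-representability of $m$ forces a fixed point. This replaces the paper's explicit machinery -- the decomposition into the sets $M_E$, the functions $\operatorname{dist}_r$, the sets $\chi_r(s,\tilde t)$ and the iterated refinement $t_r$ -- which is essentially a coordinate-by-coordinate implementation of the same orbit count, and it also dispenses with the paper's preliminary dichotomy on whether infinitely many elements of $x$ share a support and with its Case~1/Case~2 split.

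The genuine gap is in your last step, where $z\mapsto F_z$ is turned into a single-valued symmetric function. The tie-break by $\prec$-minimal least support fails and no thinning of $Y$ can repair it: already for $X=\mathcal{A}$, every infinite symmetric $Y$ contains infinitely many complete blocks, and for suitable $z$ (for instance $z=A_0\cup\{b_{0,0},b_{0,1},b_{0,2},b_{0,3}\}$ with $p_0=3$, $p_1=5$, $m=7$) the set $F_z$ consists of several atoms of a single block, all of which have the same least closed support, namely that block; your proposed further thinning cannot remove such $z$ from $[Y]^m$. The missing observation is that no canonical tie-break is needed: in the ground model $\mathcal{M}_{p_0,p_1}\models\AC$ choose one representative $z_0$ from each $\fix_G(E^*)$-orbit of $[Y]^m$ together with an arbitrary $c(z_0)\in F_{z_0}$, and extend equivariantly by $c(\phi[z_0]):=\phi(c(z_0))$ for $\phi\in\fix_G(E^*)$. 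Well-definedness is exactly the statement that $F_{z_0}$ is fixed pointwise by $H_{z_0}$, and equivariance is all that is required for $E^*$ to support $c$; this is the same ground-model-choice-plus-symmetry device the paper itself uses in its Case~2 via the set $w_0$. With that replacement your argument goes through, but as written the final step would fail.
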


\begin{proof}
Let $x\in\mathcal{V}_{p_0,p_1}$ be an infinite set with closed support $E_x\in\fin(\mathcal{A})$. If there is an $E\in\fin(\mathcal{A})$ such that 
$$
y:=\{ z\in x\mid E\text{ is a support of }z\}
$$
is an infinite set, we are done because $y$ can be well-ordered in $\mathcal{V}_{p_0,p_1}$. So assume that for all $E\in\fin(\mathcal{A})$ there are only finitely many $z\in x$ with support $E$. For every closed support $E\in\fin(\mathcal{A})$ with $E_x\subsetneq E$ define
$$
M_E:=\{ z\in x\mid E \text{ is the minimal closed support of $z$ with } E_x\subseteq E\}.
$$
The sets $M_E$ are finite. For all $z\in M_E$ define
$$
[z]:=\{ \phi(z)\mid \phi\in \operatorname{fix}_G(E_x)\}\subseteq M_E.
$$
There are two cases:\newline

\noindent\emph{\textbf{Case 1: }}For infinitely many $M_E$ there is a $z\in M_E$ with
$$
[z]=M_E.
$$
Let $y:=\bigcup \{M_E\mid E_x\subsetneq E\land\exists z\in M_E(M_E=[z])\}.$ The set $y$ is in $\mathcal{V}_{p_0,p_1}$ because $E_x$ is a support of $y$. Let $t\subseteq y$ with $\lvert t\rvert=m$ and let $E$ be the smallest support such that $M_E\subseteq y$ and $\lvert t\cap M_E\rvert$ is not of the form $k_0p_0+k_1p_1$ with $k_0,k_1\in\omega$. Define $t_{-1}:=t\cap M_E$. Since $E\setminus E_x\not =\emptyset$ there are blocks $A_{i_0},\dots, A_{i_{u-1}},B_{j_u},\dots, B_{j_{u+v-1}}$ with 
$$
E\setminus E_x=\bigcup \{ A_{i_0},A_{i_1}\dots, A_{i_{u-1}},B_{j_u},B_{j_{u+1}},\dots, B_{j_{u+v-1}}\}.
$$
Define
$$
\tilde{G}:=\left\{\prod_{k\in u}\alpha_{i_k}^{\kappa_{i_k}}\circ\prod_{l\in v}\beta_{j_{u+l}}^{\lambda_{j_{u+l}}}\mid \forall k< u~\forall l< v~(\kappa_{i_k}< p_0\land \lambda_{j_{u+l}}< p_1)\right\}.
$$
Let $\phi=\alpha_{i_0}^{\kappa_{i_0}}\circ\dots\circ\alpha_{i_{u-1}}^{\kappa_{i_{u-1}}}\circ\beta_{j_u}^{\lambda_{j_u}}\circ\dots\circ\beta_{j_{u+v-1}}^{\lambda_{j_{u+v-1}}}\in \tilde{G}$. Define
$$
\phi\vert_r:=\kappa_{i_r}\text{ if } r< u\text{ and } \phi\vert_r:=\lambda_{j_r} \text{ if } u\leq r< u+v.
$$
The elements in $\tilde{G}$ can be ordered lexicographically. We call this well-ordering $\leq_{\tilde{G}}$. For all $s,s^\prime< t_{-1}$ and all $r< u+v$ define
$$
\operatorname{dist}_r(\langle s,s^\prime\rangle):=\phi\vert_r,
$$
where $\phi$ is the $\leq_{\tilde{G}}$-smallest element in $\tilde{G}$ with $\phi(s)=s^\prime$.\newline

The rest of the proof can be done as in \cite[Proposition 6.6]{Wurzelpaper}. For the sake of completeness, we will redo it here:

\noindent\textit{Claim 1:} For all $s,s^\prime, s^{\prime\prime}< t_{-1}$ and all $r< u+v$ we have that
$$
\operatorname{dist}_r(\langle s,s^\prime\rangle)+_p \operatorname{dist}_r(\langle s^\prime, s^{\prime\prime}\rangle)=\operatorname{dist}_r(\langle s,s^{\prime\prime}\rangle),
$$
where $p=p_0$ if $r< u$ and $p=p_1$ if $u\leq r<u+v$. Moreover, $+_p$ denotes addition modulo $p$.

\begin{proof}[Proof of Claim 1]
Let $\phi_0,\phi_1,\phi\in \tilde{G}$ be $\leq_{\tilde{G}}$-minimal with 
$$
\phi_0(s)=s^\prime, \phi_1(s^\prime)=s^{\prime\prime}\text{ and }\phi(s)=s^{\prime\prime}.
$$
Assume that $\phi\not =\phi_1\circ\phi_0$. So we have that $\phi^{-1}\circ\phi_1\circ \phi_0\not = \operatorname{id}$ and
$$
\phi^{-1}\circ\phi_1\circ\phi_0(s)=s.
$$
Let $l< u+v$ be the largest number such that 
$$
\phi^{-1}\circ\phi_1\circ\phi_0\vert_{l}\not =0.
$$
Without loss of generality we assume that $l< u$. Then let $m\in\omega$ with
$$
(\phi^{-1}\circ\phi_1\circ\phi_0)^m\vert_l=1.
$$
Note that $(\phi^{-1}\circ\phi_1\circ\phi_0)^m\not =\alpha_{i_l}$ because otherwise we would have that $\alpha_{i_l}(s)=s$ which is a contradiction to the fact that $E$ is the minimal support of $s$ with $E_x\subseteq E$.  So there is a $\varphi\in\tilde{G}\setminus\{ \operatorname{id}\}$ with
$$
(\phi^{-1}\circ\phi_1\circ\phi_0)^m=\varphi\circ\alpha_{i_l}\text{ and } \varphi<_{\tilde{G}}\alpha_{i_l}.
$$
Then $\varphi\circ \alpha_{i_l}(s)=s\Rightarrow \alpha_{i_l}(s)=\varphi^{-1}(s)$. Note that $\varphi^{-1}<_{\tilde{G}}\alpha_{i_l}$. We have that $\phi_0\vert_{l}\not =0$ or $\phi_1\vert_{l}\not =0$ or $\phi\vert_{l}\not = 0$. Without loss of generality we assume that $\phi_0\vert_{l}\not =0$. Then
$$
 \phi_0\circ\alpha_{i_l}^{-1}\circ\varphi^{-1}<_{\tilde{G}} \phi_0
$$
and 
$$
\phi_0\circ\alpha_{i_l}^{-1}\circ\varphi^{-1}(s)=\phi_0\circ\alpha_{i_l}^{-1}\circ\alpha_{i_l}(s)=\phi_0(s)=s^\prime.
$$
This contradicts the minimality of $\phi_0$.
\renewcommand{\qedsymbol}{{\large$\boldsymbol{\dashv}_{\text{Claim 1}}$}}
\end{proof}

For all $\tilde{t}\subseteq t_{-1}$, all $s< \tilde{t}$ and all $r< u+v$ define
$$
\chi_r(s,\tilde{t}):=\{ \operatorname{dist}_r(\langle s,s^\prime\rangle)\mid s^\prime\in\tilde{t}\}.
$$
These sets have the following properties:\newline

\noindent\textit{Claim 2:} For all $\tilde{t}\subseteq t_{-1}$ and all $s,s^\prime<\tilde{t}$ we have that
\begin{enumerate}
	\item $1\leq \lvert \chi_r(s,\tilde{t})\rvert\leq p_0$ for all $r< u$ and $1\leq\lvert \chi_r(s,\tilde{t})\rvert\leq p_1$ for all $u\leq r< u+v$.
	\item for all $r< u+v$ there is a $k_r\in \omega$ such that $\chi_r(s,\tilde{t})=\chi_r(s^\prime, \tilde{t})+_p k_r$, where $p=p_0$ if $r< u$ and $p=p_1$ if $u\leq r< u+v$.
	\item $\lvert \chi_r(s,\tilde{t})\rvert=\lvert\chi_r(s^\prime,\tilde{t})\rvert$.
	\item if $s\not = s^\prime$ there is an $r< u+v$ such that $\chi_r(s,\tilde{t})\not =\chi_r(s^\prime, \tilde{t})$.
\end{enumerate}

\begin{proof}[Proof of Claim 2]
1. Note that $0<\chi_r(s,\tilde{t})$ since $\operatorname{dist}_r(\langle s, s\rangle)=0$.\newline
2. Set $k_r:=\phi\vert_r$, where $\phi$ is $\leq_{\tilde{G}}$-minimal with $\phi(s)=s^\prime$ and use Claim 1.\newline
3. This follows from 2.\newline
4. Let $s,s^\prime<\tilde{t}$ and let $\phi$ be $\leq_{\tilde{G}}$-minimal with $\phi(s)=s^\prime$. If $\chi_r(s,\tilde{t})=\chi_r(s^\prime,\tilde{t})$ for all $r< u+v$ it follows that $\phi\vert_r=k_r=0$ for all $r< u+v$. So $\phi=\operatorname{id}$ and therefore $s=s^\prime$.
\renewcommand{\qedsymbol}{{\large$\boldsymbol{\dashv}_{\text{Claim 2}}$}}
\end{proof}

We define an ordering $\preceq$ on the sets $\chi_r(s,\tilde{t})$ as follows: $\chi_r(s,\tilde{t})\preceq \chi_r(s^\prime,\tilde{t})$ if and only if $\chi_r(s,\tilde{t})=\chi_r(s^\prime,\tilde{t}) \text{ or the smallest integer in the symmetric difference } \chi(s,\tilde{t})\Delta\chi_r(s^\prime,\tilde{t}) \text{ belongs to } \chi_r(s,\tilde{t}).$\newline

\noindent For all non-empty sets $\tilde{t}\subseteq t_{-1},$ all $r< u+v$ and all natural numbers $n$ define $\lambda_{r,n}(\tilde{t})$ as follows: Let $\lambda_{r,0}(\tilde{t}):=\emptyset$ and for every $n\in\omega\setminus \{ 0\}$ let
$$
\lambda_{r,n}(\tilde{t}):=\left\{ s\in\tilde{t}\setminus\bigcup_{i=0}^{n-1}\lambda_{r,i}(\tilde{t})\mid \forall s^\prime\in\tilde{t}\setminus\bigcup_{i=0}^{n-1}\lambda_{r,i}(\tilde{t})\left (\chi_r(s,\tilde{t})\preceq \chi_r(s^\prime,\tilde{t})\right )\right\}.
$$
Note that $\bigcup_{n\in\omega}\lambda_{r,n}(\tilde{t})=\tilde{t}$ and only finitely many $\lambda_{r,n}(\tilde{t})$ are non-empty. Assume that $t_{r-1}$ is defined for an $r< u+v$. Then let
$$
t_r:=\lambda_{r,n_0}(t_{r-1}),
$$
where $n_0\in\omega$ is the smallest natural number such that $\lambda_{r,n_0}(t_{r-1})$ is not of the form
$$
cp_0+dp_1
$$
with $c,d\in\omega$. By Claim 2 $t_{u+v-1}$ is a one-element set. I.e.~there is an $s< t$ with 
$$
t_{u+v-1}=\{ s\}.
$$
So we choose $s$ from $t$. This shows that $\operatorname{RC}_m$ holds in $\mathcal{V}_{p_0,p_1}$.\newline

\noindent\emph{\textbf{Case 2: }}There are infinitely many $M_E$ such that there are $z,z^\prime\in M_E$ with 
$$
[z]\cap [z^\prime]=\emptyset.
$$
Our goal is to reduce this case to Case 1. For every $E\in\fin(\mathcal{A})$ with $E_x\subsetneq E$ define $[M_E]:=\{ [z]\mid z\in M_E\}.$ Furthermore choose a $w_0$ in the ground model $\mathcal{M}_{p_0,p_1}\models \operatorname{ZFA}+\operatorname{AC}$ such that 
$$
w_0\setminus\bigcup_{E\in\fin(\mathcal{A}); E_x\subsetneq E}[M_E]=\emptyset\text{ and for all }E\in\fin(\mathcal{A}) \text{ with } E_x\subsetneq E \text{ and } M_E\not =\emptyset~(\lvert w_0\cap [M_E]\rvert =1).
$$
In other words, $w_0$ picks exactly one element from each non-empty $[M_E]$. Note that $E_x$ is a support of $w_0$. So $w_0\in\mathcal{V}_{p_0,p_1}$. Choose
$$
M_E^\prime:=M_E\cap w_0.
$$
This reduces Case 2 to Case 1.
\end{proof}

\begin{prop}
\label{propneg}
Let $m,n\in\omega\setminus \{0,1\}$ and let $p_0,\dots, p_{k-1}$ be $k\in\omega$ prime numbers such that 
$$
m\not = \sum_{i< k}c_ip_i
$$
for all $c_i\in\omega$, $i< k$, and
$$
n=\sum_{i< k}d_ip_i
$$
for some $d_i\in\omega,~i\in k$. Then
$$
\operatorname{RC}_m\not \Rightarrow \operatorname{WOC}_n^-
$$
in $\operatorname{ZF}$.
\end{prop}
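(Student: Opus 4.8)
The plan is to prove the independence first in $\operatorname{ZFA}$ and then transfer it to $\operatorname{ZF}$. Since $p_0,\dots,p_{k-1}$ are $k$ prime numbers, I would work in the permutation model $\mathcal{V}_{p_0,\dots,p_{k-1}}$ obtained from the $k$-block generalization of the construction above (as described in the Remark): blocks of sizes $p_0,\dots,p_{k-1}$, the abelian group $G$ of coordinatewise cyclic rotations generated by permutations of these orders, and the normal filter generated by the subgroups $\fix_G(E)$ for $E\in\fin(\mathcal{A})$. The goal is then to show that $\mathcal{V}_{p_0,\dots,p_{k-1}}\models\operatorname{RC}_m$ while $\mathcal{V}_{p_0,\dots,p_{k-1}}\models\lnot\operatorname{WOC}_n^-$; once this is done, I would invoke the observation announced at the start of the section that both $\operatorname{RC}_m$ and $\operatorname{WOC}_n^-$ are injectively boundable and apply \cite[Theorem~3A3]{Pincus72} to conclude $\operatorname{RC}_m\not\Rightarrow\operatorname{WOC}_n^-$ in $\operatorname{ZF}$.

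For the failure of $\operatorname{WOC}_n^-$ I would generalize Lemma~\ref{model1}. Using the representation $n=\sum_{i<k}d_ip_i$, I would let each member of $\mathcal{F}$ be a disjoint union of $d_i$ blocks of size $p_i$ for every $i<k$, taken from consecutive blocks so that the members are pairwise disjoint and the whole family is well-orderable with empty support. Assuming toward a contradiction an infinite subfamily $\mathcal{G}\subseteq\mathcal{F}$ with a choice function $g$, I would take a closed support $E_g$ of $g$, pick $G_0\in\mathcal{G}$ with $G_0\cap E_g=\emptyset$, and observe that $g(G_0)$ lies in some block $C$ of $G_0$. Applying the cyclic generator $\gamma$ of that block (which fixes $E_g$ pointwise and $G_0$ setwise) then yields $g(\gamma(G_0))=g(G_0)\neq\gamma(g(G_0))$, contradicting that $E_g$ supports $g$.

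The harder direction is $\operatorname{RC}_m$, where I would adapt Lemma~\ref{model2}. The decisive hypothesis is that $m$ is \emph{not} of the form $\sum_{i<k}c_ip_i$. Given an infinite set $x$ with closed support $E_x$, I would split into the same two cases as in Lemma~\ref{model2}, reducing the orbit case to the situation where each relevant $M_E$ is a single orbit, and then run the selection machinery: for a set $t$ of size $m$ meeting some $M_E$ in a piece $t_{-1}$ whose size is not representable as $\sum_{i<k}c_ip_i$, I would define the distance functions $\operatorname{dist}_r$ and the sets $\chi_r(s,\tilde t)$ across all block-coordinates $r$ of $E\setminus E_x$, each with modulus equal to the size of the corresponding block. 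The analogues of Claim~1 and Claim~2 (cocycle additivity modulo $p_i$, and the fact that distinct points are separated by some coordinate) go through verbatim, and the iterated refinement $t_{-1}\supsetneq t_0\supsetneq\cdots$ terminates in a singleton precisely because at each coordinate the non-representability of $m$ guarantees a proper nonempty refinement whose size is again not of the form $\sum_{i<k}c_ip_i$. I expect the main obstacle to be verifying that these combinatorial claims, proved in \cite{Wurzelpaper} and in Lemma~\ref{model2} for two primes, generalize uniformly to $k$ distinct moduli; once that bookkeeping is in place, the non-representability of $m$ does all the genuine work.
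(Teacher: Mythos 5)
Your proposal follows essentially the same route as the paper: the $k$-block permutation model $\mathcal{V}_{p_0,\dots,p_{k-1}}$, the generalizations of Lemma \ref{model1} (blocks as members of a well-orderable family, cyclic generator moving the chosen point) and Lemma \ref{model2} (the non-representability of $m$ driving the $\operatorname{dist}_r$/$\chi_r$ refinement to a singleton), followed by Pincus' transfer theorem. The only difference is one of emphasis: the paper dispatches the model-theoretic part with ``as in Lemmas \ref{model1} and \ref{model2}'' and spends its proof verifying that $\operatorname{RC}_m$ is injectively boundable and $\lnot\operatorname{WOC}_n^-$ is boundable, whereas you detail the former and defer the latter to the announced observation; both pieces are present and correct in your outline.
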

\begin{proof}
As in Lemma \ref{model1} and Lemma \ref{model2} we can prove that
\begin{equation}
\label{transfer}
\mathcal{V}_{p_0,\dots, p_{k-1}}\models \operatorname{RC}_m\land \lnot \operatorname{WOC}_n^-.
\end{equation}
In order to transfer this statement to $\operatorname{ZF}$, we have to show that $\operatorname{RC}_n$ and $\operatorname{WOC}_n^-$ are injectively boundable for for all $n\in\omega$. Then we can use Pincus' transfer theorem \cite[Theorem 3A3]{Pincus72}. The terms ``boundable" and ``injectively boundable" are defined in \cite{Pincus72}.\newline

\noindent For a set $x$ we define the injective cardinality
$$
\lvert x\rvert_-:=\{\alpha\in \Omega\mid \text{ there is an injection from $\alpha$ to $x$}\},
$$
where $\Omega$ is the class of all ordinal numbers. Moreover let $\varphi(x)$ denote the property ``if $x$ is an infinite set, there is an infinite $y\subseteq x$ with a choice function on $[y]^n$".  Note that $\varphi(x)$ is boundable. Since $\varphi(x)$ holds when $\lvert x\rvert_->\omega$, it follows that 
$$
\operatorname{RC}_n\iff \forall x(\lvert x\rvert_-\leq\omega\Rightarrow \varphi(x)).
$$
So $\operatorname{RC}_n$ is injectively boundable. Note that $\lnot\operatorname{WOC}_n^-$ is boundable. So (\ref{transfer}) is transferable into $\operatorname{ZF}$.
\end{proof}

Since $\lnot\operatorname{WOC}_n^-\Rightarrow \lnot\operatorname{RC}_n$, Proposition \ref{propneg} gives us:

\begin{cor}
\label{spezialfall}
Let $m,n\in\omega\setminus \{0,1\}$ and let $p_0,\dots, p_{k-1}$ be $k\in\omega$ prime numbers such that 
$$
m\not = \sum_{i< k}c_ip_i
$$
for all $c_i\in\omega$, $i< k$, and
$$
n=\sum_{i< k}d_ip_i
$$
for some $d_i\in\omega,~i< k$. Then
$$
\operatorname{RC}_m\not \Rightarrow \operatorname{RC}_n
$$
in $\operatorname{ZF}$.
\end{cor}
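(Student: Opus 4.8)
The plan is to derive this immediately from Proposition \ref{propneg} together with the positive direction already established in Corollary \ref{Cor6.3}. First I would invoke Proposition \ref{propneg}: under the stated hypotheses on $m$, $n$ and the primes $p_0,\dots,p_{k-1}$, it yields $\operatorname{RC}_m\not\Rightarrow\operatorname{WOC}_n^-$ in $\operatorname{ZF}$. Concretely, this means there is a model of $\operatorname{ZF}$ (obtained by transferring the permutation model $\mathcal{V}_{p_0,\dots,p_{k-1}}$ via Pincus' theorem) in which $\operatorname{RC}_m$ holds while $\operatorname{WOC}_n^-$ fails.

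Next I would recall that $\operatorname{RC}_n\Rightarrow\operatorname{WOC}_n^-$ holds in $\operatorname{ZF}$ for every $n$; this is exactly Corollary \ref{Cor6.3}, itself the instance $m=n$ of Proposition \ref{positive}. The defining condition of that proposition is met trivially when the two natural numbers coincide, since any representation $n=\sum_{i<r}a_ip_i$ is simultaneously a representation of $m=n$ (take $b_i=a_i$). Taking the contrapositive gives $\lnot\operatorname{WOC}_n^-\Rightarrow\lnot\operatorname{RC}_n$.

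Combining the two, in the model produced by Proposition \ref{propneg} the failure of $\operatorname{WOC}_n^-$ forces the failure of $\operatorname{RC}_n$, while $\operatorname{RC}_m$ continues to hold. Hence that same model witnesses $\operatorname{RC}_m\land\lnot\operatorname{RC}_n$, and so $\operatorname{RC}_m\not\Rightarrow\operatorname{RC}_n$ in $\operatorname{ZF}$, as required.

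There is no genuine obstacle here: all of the substantive work---the construction of the permutation models in Lemmas \ref{model1} and \ref{model2}, the verification that $\operatorname{RC}_m$ and $\operatorname{WOC}_n^-$ are injectively boundable, and the application of Pincus' transfer theorem---is already carried out in Proposition \ref{propneg}. The only point one must be sure of is the direction $\operatorname{RC}_n\Rightarrow\operatorname{WOC}_n^-$, which is safely in hand via Corollary \ref{Cor6.3}; everything else is a one-line logical deduction from the contrapositive $\lnot\operatorname{WOC}_n^-\Rightarrow\lnot\operatorname{RC}_n$.
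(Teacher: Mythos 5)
Your proposal is correct and matches the paper's own argument exactly: the paper derives Corollary \ref{spezialfall} in one line by combining Proposition \ref{propneg} with the contrapositive $\lnot\operatorname{WOC}_n^-\Rightarrow\lnot\operatorname{RC}_n$ of Corollary \ref{Cor6.3}. Your additional remark that the hypothesis of Proposition \ref{positive} is trivially satisfied when $m=n$ is a correct (and slightly more explicit) justification of that contrapositive.
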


%

\begin{cor}
\label{Cor8.4}
Let $p$ be a prime number, let $m\in\omega\setminus \{0\}$ and $n\in\omega\setminus \{0,1\}$. Then we have that
$$
\operatorname{RC}_{p^m}\Rightarrow \operatorname{WOC}_n^-
$$
if and only if $n\mid p^m$ or $p=2$, $m=1$ and $n=4$.
\end{cor}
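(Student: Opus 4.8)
The plan is to derive the statement purely number-theoretically from Theorem~\ref{maintheorem}. Specializing that characterization to the case where the $\operatorname{RC}$-index is $p^m$ and the $\operatorname{WOC}^-$-index is $n$, the implication $\operatorname{RC}_{p^m}\Rightarrow\operatorname{WOC}_n^-$ holds if and only if the following condition $(\ast)$ is satisfied: for every finite set $P$ of primes such that $n$ is a sum of (not necessarily distinct) primes from $P$, also $p^m$ is such a sum. Writing $\langle P\rangle$ for the set of all $\sum_{q\in P}c_q q$ with $c_q\in\omega$, condition $(\ast)$ reads $n\in\langle P\rangle\Rightarrow p^m\in\langle P\rangle$ for every finite prime set $P$. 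I would prove the two directions separately, the point being that the right-hand side of the corollary is exactly the list of $(p,m,n)$ for which $(\ast)$ holds.

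For the direction from the right-hand side to $(\ast)$: if $n\mid p^m$, write $n=p^i$ with $i\le m$; given any $P$ with $p^i=\sum_{q\in P}a_q q$, multiplying through by $p^{m-i}\in\omega$ exhibits $p^m=\sum_{q\in P}(p^{m-i}a_q)q\in\langle P\rangle$, so $(\ast)$ holds. In the exceptional case $(p,m,n)=(2,1,4)$ one uses that the only way to write $4$ as a sum of primes is $2+2$; hence any $P$ with $4\in\langle P\rangle$ must contain $2=p^m$, and again $p^m\in\langle P\rangle$. Both checks are immediate.

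For the converse I would argue by contraposition, producing, whenever the right-hand side fails, a finite prime set $P$ with $n\in\langle P\rangle$ but $p^m\notin\langle P\rangle$. If $n$ is not a power of $p$, a prime factor $q\neq p$ of $n$ gives the singleton witness $P=\{q\}$, since $q\mid n$ but $q\nmid p^m$; this forces $n=p^j$, and it remains to treat $j>m$ with $(p,m,j)\neq(2,1,2)$. The uniform idea is to take $P=\{q,r\}$ with $r$ a prime satisfying $p^m<r<p^j$ and $q$ a prime factor of $p^j-r$ different from $p$: then $p^j=\frac{p^j-r}{q}\,q+r\in\langle P\rangle$, while in any putative representation $p^m=\alpha q+\beta r$ the inequality $r>p^m$ forces $\beta=0$, hence $q\mid p^m$, which is impossible. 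For odd $p$ one simply takes $q=2$ (which divides the even number $p^j-r$) and $r$ a prime in $(p^m,2p^m)$ supplied by Bertrand's postulate, so no exception arises. For $p=2$ one takes $r$ an odd prime in $(2^m,2^j)$ with $2^j-r\ge 3$ and lets $q$ be an odd prime factor of the odd number $2^j-r$.

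The only delicate point, and the heart of the argument, is the existence of such an $r$ when $p=2$ and $j=m+1$, where the admissible window shrinks to $(2^m,2^{m+1})$ and $r$ must moreover satisfy $r\le 2^{m+1}-3$. For $m\ge 3$ this follows from the refinement of Bertrand's postulate guaranteeing at least two primes in $(2^m,2^{m+1})$, so that some prime there differs from the (possibly prime) value $2^{m+1}-1$; for $m=2$ one checks directly that $r=5$ works, and for $j\ge m+2$ any Bertrand prime in $(2^m,2^{m+1})$ already lies below $2^j-3$. Exactly one configuration escapes this construction, namely $m=1,j=2$, where the window $(2,4)$ contains only $3=2^2-1$ and $2^2-3=1$ has no odd prime factor; this is precisely the exceptional triple giving $n=4$, for which $(\ast)$ was already verified. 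I expect the bookkeeping of these boundary cases in the $p=2$ branch to be the main obstacle, whereas the odd-$p$ branch and both easy directions are straightforward.
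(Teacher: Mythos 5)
Your proof is correct and follows essentially the same route as the paper: reduce to the number-theoretic criterion of Theorem~\ref{maintheorem}, dispose of $n$ having a prime factor $q\neq p$ via the singleton $\{q\}$, multiply a representation of $p^k$ through by $p^{m-k}$ when $k\le m$, and for $k>m$ use Bertrand's postulate to build a pair $\{q,r\}$ with $r>p^m$ that represents $p^k$ but not $p^m$, with $(2,1,4)$ as the lone exception. You are in fact slightly more careful than the paper at the $p=2$ boundary, where both arguments really need a prime in $(2^{k-1},2^k)$ other than $2^k-1$, i.e.\ a little more than plain Bertrand.
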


\begin{proof}
If $n$ is divisible by a prime $q\not =p$ we have that 
$$
\mathcal{V}_q\models \operatorname{RC}_{p^m}\land\lnot \operatorname{WOC}_n^-.
$$
Therefore, $\operatorname{RC}_{p^m}\not\Rightarrow\operatorname{WOC}_n^-$ in $\operatorname{ZF}$. So we can assume that $n=p^k$ for a $k\in\omega\setminus\{0\}.$\newline

\noindent\textit{Case 1: }$m\geq k$\newline
\noindent Let $p_0,p_1\dots, p_{r-1}$ be $r\in\omega$ prime numbers such that there are $a_0,a_1,\dots, a_{r-1}\in\omega$ with
$$
n=p^k=\sum_{i< r}a_ip_i.
$$
Then 
$$
p^m=p^{m-k}p^k=\sum_{i< r}p^{m-k}a_ip_i.
$$
So by Proposition \ref{positive} we have that 
$$
\operatorname{RC}_{p^m}\Rightarrow \operatorname{WOC}_n^-.
$$

\noindent\textit{Case 2: }$m<k$\newline
\noindent First of all assume that $p\not =2$. By Bertrand's postulate there is a prime number $q_0$ with 
$$
p^m<q_0<2p^m.
$$
Note that $p^k-q_0>p^k-2p^{m}\geq p$ and $q_0\not =p$. So there is a prime number $q_1\not = p$ with
$$
q_1\mid (p^k-q_0).
$$
By construction, $p^k$ can be written as a sum of multiples of $q_0$ and $q_1$. Since $q_1\nmid p^m$ and $p^m<q_0$, we have that
$$
p^m\not = aq_0+bq_1.
$$
for all $a,b\in\omega$. So by Proposition \ref{propneg} we have that
$$
\operatorname{RC}_{p^m}\not \Rightarrow\operatorname{WOC}_{p^k}^-.
$$
Now let $p=2$ and $k\geq 3$. Then there is a prime number $q_0$ with 
$$
2^{k-1}-1<q_0<2^k-2.
$$
It follows that
$$
2<2^{k-1}<q_0<2^k-2.
$$
So $2^k-q_0>2$ and with the same argumentation as above we see that 
$$
\operatorname{RC}_{2^n}\not\Rightarrow\operatorname{WOC}_{2^k}.
$$
Now we assume that $p=2$ and $k=2$. This is the only remaining case. By Proposition \ref{positive} we have that 
$$
\operatorname{RC}_2\Rightarrow \operatorname{WOC}_4^-.
$$
if $m\not =1$.
\end{proof}

\end{document}